\newcommand{\be}{\begin{equation}}
\newcommand{\ee}{\end{equation}}
\newcommand{\ben}{\begin{eqnarray*}}
\newcommand{\een}{\end{eqnarray*}}
\newcommand{\ve}{\varepsilon}
\newcommand{\vp}{\varphi}
\newcommand{\ds}{\displaystyle}
\newcommand{\intl}{\int\limits}
\newcommand{\R}{\mathbb R}
\newtheorem{theorem}{Theorem}[section]
\newtheorem{lemma}[theorem]{Lemma}
\definecolor{darkgreen}{rgb}{0.09, 0.45, 0.27}
\definecolor{debianred}{rgb}{0.84, 0.04, 0.33}
\numberwithin{equation}{section}
\begin{document}
\title{The Gierer-Meinhardt system in the entire space with non-local proliferation rates} 

\author[ ]{Marius Ghergu$^{1,2}$, Nikos I. Kavallaris$^3$ and Yasuhito Miyamoto$^4$}

\affil[ ]{$^1$School of Mathematics and Statistics}
\affil[ ]{University College Dublin }
\affil[ ]{Belfield Campus, Dublin 4, Ireland}
\affil[ ]{E-mail: {\tt marius.ghergu@ucd.ie}}
\affil[ ]{$^2$Institute of Mathematics Simion Stoilow of the Romanian Academy}
\affil[ ]{21 Calea Grivitei St., 010702 Bucharest, Romania}
\affil[ ]{}

\affil[ ]{$^3$Karlstad University, Faculty of Health, Science and Technology}
\affil[ ]{Department of Mathematics and Computer Science, Sweden}
\affil[ ]{E-mail: {\tt nikos.kavallaris@kau.se}}
\affil[ ]{}

\affil[ ]{$^4$Graduate School of Mathematical Sciences}
\affil[ ]{The University of Tokyo}
\affil[ ]{3-8-1 Komaba, Meguro-ku,
Tokyo 153-8914, Japan}
\affil[ ]{E-mail: {\tt miyamoto@ms.u-tokyo.ac.jp}}


\maketitle

\begin{abstract} 
In this work, we present a novel stationary Gierer-Meinhardt system incorporating non-local proliferation rates, defined as follows:
$$
\begin{cases}
\displaystyle   -\Delta u+\lambda u=\frac{J*u^p}{v^q}+\rho(x) &\quad\mbox{ in }\R^N\, , N\geq 1,\\[0.1in]
\displaystyle  -\Delta v+\mu v=\frac{J*u^m}{v^s} &\quad\mbox{ in }\R^N.\\[0.1in]
\end{cases}
$$
This system emerges in various contexts, such as biological morphogenesis, where two interacting chemicals, identified as an activator and an inhibitor, are described, and in ecological systems modeling the interaction between two species, classified as specialists and generalists. The non-local interspecies interactions are represented by the terms $J*u^p, J*u^m$ where the $*$-symbol denotes the convolution operation in $\R^N$ with a kernel $J\in C^1(\R^N\setminus\{0\})$.
 In the system, we assume that $0<\rho\in C^{0, \gamma}(\R^N)$ with  $\gamma\in (0,1)$, while the parameters satisfy $\lambda, \mu, q,m,s>0$ and $p>1$.  Under various integrability conditions on the kernel 
$J$, we establish the existence and non-existence of classical positive solutions in the function space $C^{2, \delta}_{loc}(\R^N).$ These results further highlight the influence of the non-local terms, particularly the proliferation rates,  in the proposed model.
\end{abstract}

\noindent{\bf Keywords: Gierer-Meinhardt system; steady state solutions; non-local interactions; existence and non-existence of classical solutions} 

\medskip

\noindent{\bf 2020 AMS MSC: 35J47, 35B53, 35C15, 35B45} 


\section{Introduction}

As early as 1952,  Alan Turing, in his seminal paper \cite{T52}, pioneered the use of reaction–diffusion systems to model morphogenesis, the process responsible for the regeneration of tissue structures in hydra—an organism only a few millimeters long, composed of approximately 100,000 cells. Observations of morphogenesis in hydra suggested the involvement of two chemical substances (morphogens): a slowly diffusing (short-range) activator and a rapidly diffusing (long-range) inhibitor.

Turing was the first to demonstrate that while diffusion typically exerts a smoothing and homogenizing effect on individual chemicals, the interaction of two or more chemicals with differing diffusion rates could destabilize uniform steady states in reaction–diffusion systems. This instability leads to the emergence of nonhomogeneous patterns in the distribution of these reactants, a phenomenon now widely recognized as {\it diffusion-driven instability (DDI)} or {\it Turing instability}.

Building on Turing’s groundbreaking concept, Gierer and Meinhardt \cite{GM72} proposed, in 1972, the activator–inhibitor system that has since become known as the Gierer–Meinhardt system. This model was specifically designed to capture the dynamics of hydra regeneration within a defined domain $\Omega\subset \R^N, N\geq 1$ and is given below:
\begin{equation}\label{lGMs}
\begin{cases}
\displaystyle   u_t-D_u\Delta u=-\lambda u+k_u\frac{u^p}{v^q}+\rho(x), &\quad\mbox{ in }\Omega\times(0,\infty)\, ,\\[0.1in]
\displaystyle  v_t -D_v\Delta v=-\mu v+k_v\frac{u^m}{v^s}, &\quad\mbox{ in }\Omega\times(0,\infty),\\[0.1in]
\displaystyle\frac{\partial u}{\partial \eta}=\displaystyle\frac{\partial v}{\partial \eta}=0,  &\mbox{ on }\partial\Omega\times(0,\infty),\,\\[0.1in]
u(x,0)=u_0(x), \; v(x,0)=v_0(x), &\quad\mbox{ in }\Omega,
\end{cases}
\end{equation}
where $\eta$ denotes the unit outer normal vector to $\partial \Omega$ whilst $u$ and $v$ stand for the concentrations
of the activator and the inhibitor respectively. System \eqref{lGMs} intends to provide a thorough explanation of symmetry breaking as well as of {\it de novo pattern formation} by virtue of the coupling of a local activation and a long-range inhibition process.
The two subpopulations move randomly with diffusion rates $D_{u}$ and $D_{v}$ and usually the inhibitor population $v$ is  much faster than the activator population $u.$

The activator and inhibitor subpopulations die at rates $\lambda>0$ and $\mu>0$, and proliferate at rates $k_{u}$ and $k_{v}$, respectively. The inserted nonlinearities $\frac{u^p}{v^q}, \frac{u^m}{v^s}$ describe the fact that the activator promotes the differentiation process and it stimulates its own production, whereas the inhibitor acts a suppressant against the self-enhancing activator to prevent the unlimited growth. The  power coefficients $p,q,m,s$ in the proliferation terms model the strength of interactions between the two populations and satisfy the relations $p>1, q,m>0$ and $s>-1,$ cf. \cite{KS17, KS18}. Furthermore, the term $\rho(x)\geq 0$ is called {\it basic production term} for the activator subpopulation $u$ and represents the amount of activator produced by cell in a unit time. It is assumed that $\rho\in C^{0,\gamma}(\overline{\Omega})$, $\gamma\in (0,1)$. Regarding the initial data, it is usually considered that $u_0, v_0\in C^{2,\gamma}(\overline{\Omega}),\, u_0(x)>0, \; v_0(x)>0 $ in $\overline{\Omega}$ and $\frac{\partial u_0}{\partial \eta}|_{\partial \Omega}=\frac{\partial v_0}{\partial \eta}|_{\partial \Omega}=0$.

In the context of system \eqref{lGMs}, its dynamics can be effectively characterized by two key indices: the net self-activation index $\chi := \frac{p-1}{m}$ and the net cross-inhibition index $\delta := \frac{q}{s+1}$.

The index $\chi$ quantifies the interplay between the self-activation of the activator and the cross-activation of the inhibitor. A large value of $\chi$ implies that the activator experiences significant net growth regardless of the influence of the inhibitor. Conversely, the index $\delta$ measures the extent to which the inhibitor suppresses both the activator's production and its own. When $\delta$ is large, the inhibitor strongly suppresses the activator's production, leading to a notable reduction in its activity.

From a biological and mathematical perspective, it is typically assumed that the parameters $p,q,m$, and $s$ satisfy $\chi<\delta$ which leads to the condition
\begin{equation}\label{tc}
\sigma:=\frac{mq}{(p-1)(s+1)}>1,
\end{equation}
commonly referred to as the {\it Turing condition}. This condition ensures the emergence of Turing instability patterns, which play a crucial role in various natural and mathematical phenomena, as discussed in \cite{KS17, KS18, NST06}.  Under this setting, the existence of global solutions to \eqref{lGMs} was obtained in \cite{J06} (see also \cite{MT87, R84} for some partial results on the exponents $p,q,m,s$). The {\it anti-Turing condition} $\sigma\leq 1$ is also relevant and appears in some recent mathematical studies \cite{G23,G24,KS17, KS18, KBM21}. Nowadays, there is a vast literature around the Gierer-Meinhardt model and its steady-states in both bounded and unbounded domains, see \cite{DKW03, DKZ21, DGKZ22, G09, G23, G24, GM25, KW08, KWY13, M06, M07, NST06, WW99}.

In an ecological framework, system \eqref{lGMs} can be employed to model the coexistence of specialist and generalist species within a shared habitat—a phenomenon extensively documented in natural ecosystems, cf. \cite{Buchi2014}. 
Several ecological mechanisms have been proposed to explain this coexistence, including habitat selection \cite{Morris1996_CoexistenceHabitatSelection}, variations in dispersal rates \cite{Buchi2014}, and adaptive foraging behaviors in response to environmental fluctuations \cite{WilsonYoshimura1994}. Among these, studies focusing on coexistence through differences in foraging strategies often consider systems with two distinct resources. For example, the authors in \cite{WilsonYoshimura1994} investigated the dynamics of coexistence between generalist and specialist populations in an environment featuring dual resources. In  \cite{WilsonYoshimura1994} is highlighted the role of foraging strategies in shaping the interactions and stability of such populations.

In this context, the variable $u$ represents the specialist subpopulation, which is characterized by a lower food consumption and higher motility measured again by the diffusion coefficient $D_u$ and death rate equal to $\lambda.$   On the other hand, the variable $v$ corresponds to the generalist subpopulation, which requires a higher amount of food but has lower motility given by the diffusion coefficient $D_v,$ whilst it dies with rate equal to $\mu.$ 
The enhanced motility exhibited by specialists enables them to search for and acquire food resources with greater efficiency compared to generalists. This advantage in foraging provides them with a significant competitive edge. This behavior can be mathematically represented through the reaction terms $\frac{u^p}{v^q}, \frac{u^m}{v^s}$ in \eqref{lGMs}, which encapsulate the dynamics of resource acquisition and utilization.
Despite this, both subpopulations depend on similar herbivorous food resources, creating a competitive yet interdependent dynamic within the ecosystem. This modeling framework facilitates the analysis of their interactions and provides insights into the ecological mechanisms that sustain their coexistence.

In the context of morphogenesis, the condition  $D_u\ll D_v,$ which implies that inhibitors diffuse significantly faster than activators, is frequently emphasized. This disparity in mobility plays a critical role in pattern formation, as it satisfies the Turing conditions for instability in local systems like \eqref{GMs}, ultimately driving morphogenetic processes. Conversely, in ecological settings, the condition $D_u\gg D_v,$
  becomes more relevant, reflecting the greater mobility of specialists compared to generalists. However, recent studies (e.g., \cite{GGW20, SWC21}) suggest that while differing diffusion rates are crucial for Turing instability, other factors—such as reaction kinetics and specific parameter values—can also lead to pattern formation, even in systems where diffusion coefficients are comparable.

 Classical formulations of the Gierer-Meinhardt model, as \eqref{lGMs}, often assume local interactions, which may not fully represent the spatially distributed nature of biological processes such as morphogen signals or tissue growth. Introducing non-local proliferation rates for the activator enriches the model by accounting for spatial averaging effects or long-range interactions, which are critical in phenomena like stem cell regulation or tumor growth dynamics. This extension aligns with experimental observations that highlight the significance of non-local feedback mechanisms in driving emergent patterns in various biological systems (\cite{Mein82, Mur03}). By exploring these non-local effects, the model gains enhanced realism, offering novel insights into developmental processes and aiding in the design of bio-inspired systems. Resource  consumption patterns within populations can exhibit variation based on the motility of individuals. Specifically, less motile individuals typically engage in local consumption, where interactions and resource consumption occur primarily within a limited range. On the other hand, more motile individuals are characteristic of non-local consumption, where their interactions and resource consumption extend over larger spatial areas \cite{Atamas2012_Non-localCompetitionResources}. This non-local behavior has significant implications for competing populations, as it introduces long-range effects into the competitive dynamics. The impact of non-local resource consumption on population interactions has been further explored in works such as \cite{BaylissVolpert2015_CompetingPopulationsnon-local}, highlighting the complexity of ecological interactions when spatial structure and motility are considered. 

Motivated by the considerations outlined above, and under the assumption that the boundary effects of the interaction region have a minimal influence on the overall dynamics of the populations, we introduce the following non-local Gierer-Meinhardt system 
\begin{equation}\label{PGMs}
\begin{cases}
\displaystyle   u_t-D_u\Delta u+\lambda u=\frac{J * u^p}{v^q}+\rho(x) &\quad\mbox{ in }\R^N\times(0,\infty)\, , N\geq 1,\\[0.1in]
\displaystyle  v_t -D_v\Delta v+\mu v=\frac{J * u^m}{v^s} &\quad\mbox{ in }\R^N\times(0,\infty),\\[0.1in]
u(x,t), v(x,t)\to 0 &\mbox{ as }|x|\to \infty,\,\mbox{for any}\; t>0,
\end{cases}
\end{equation}
where $0<\rho\in C^{0,\gamma}(\R^N)$, $\gamma\in (0, 1)$  with $p>1,$ and $q,m,s,\lambda,\mu>0$.
This model builds upon the foundational framework of Gierer-Meinhart systems, incorporating non-local interactions to better capture the spatially extended influence of cell/population dynamics. In doing so, we account for the broader biological/ecological context, where interactions between cells/species are not confined to their immediate vicinity but instead extend over larger spatial scales, reflecting the influence of long-range biological/ecological effects. This extension allows for a more realistic representation of cells/species interactions, particularly when considering the motility and dispersal capabilities of individuals within the population.   The diffusion coefficients $D_u,D_v$ are assumed to be of comparable magnitude. For the sake of simplicity, throughout this work, they are further simplified by setting $D_u=D_v=1.$

It is worth noting that non-local interactions have already been incorporated into the Fisher-KPP equation \cite{L20} and various cancer models \cite{KLS23, L11, L13} to provide a more accurate representation of the underlying biological phenomena.

Turning back to the model \eqref{PGMs},  the interaction terms $\frac{J*u^p}{u^q}$ and $\frac{J*u^m}{u^s}$ represent the non-local effects, where the growth of each population depends not only on local interactions but also on the presence of the competing cells/species at distant locations. The star symbol in \eqref{PGMs} refers to the convolution operation in $\R^N$, that is,
$$
J*u^p(x)=\intl_{\R^N} J(x-z)u^p(z) dz
\quad
\quad\mbox{ for all }x\in \R^N,
$$
and $ J*u^m(x)$ is defined similarly. 
 This mechanism reflects the idea that more motile individuals, such as the specialists in the model, have an extended influence on the distribution and consumption of resources, affecting populations over a larger spatial domain, cf. \cite{BaylissVolpert2015_CompetingPopulationsnon-local}. In the context of morphogenesis, this non-local mechanism underscores the critical role of spatially distributed signal processes, where interactions extend beyond immediate neighbors, in driving the emergence of complex biological patterns and structures, cf. \cite{KS18}.

Moreover, the environmental heterogeneity introduced by $\rho(x),$ which represents spatially varying resource availability, further emphasizes the non-local nature of resource interactions. 


\section{Main results}
In the current work, we focus on the study of the steady-state problem of \eqref{PGMs}, given by the following system and in any spatial dimensions $N\geq 1$,
\begin{equation}\label{GMs}
\begin{cases}
\displaystyle   -\Delta u+\lambda u=\frac{J * u^p}{v^q}+\rho(x) &\quad\mbox{ in }\R^N\, , \\[0.1in]
\displaystyle   -\Delta v+\mu v=\frac{J * u^m}{v^s} &\quad\mbox{ in }\R^N,\\[0.1in]
u(x), v(x)\to 0 &\mbox{ as }|x|\to \infty,
\end{cases}
\end{equation}
where $\rho\in C^{0, \gamma}(\R^N)$, $\gamma\in (0,1)$ is positive  and the exponents $p>1, q,m,s>0$ are assumed to satisfy the {\it anti-Turing condition}
\[
\sigma:=\frac{mq}{(p-1)(s+1)}\leq 1.
\]
The convolution kernel  $J\in C^1(\R^N\setminus\{0\})$ is positive and there exist $c>0$ and $\theta\in (0, N)$ such that
\begin{equation}\label{J}
J(z)\leq c|z|^{\theta-N}\,,\quad |\nabla J(z)|\leq c|z|^{\theta-N-1} \quad\mbox{ for all }z\in \R^N\setminus\{0\}.
\end{equation}

We are looking for positive classical solutions of \eqref{GMs}, that is, functions $u, v\in C^2(\R^N)$ such that:
\begin{itemize}
\item $u, v>0$ in $\R^N$; 
\item for all $x\in \R^N$ we have
\end{itemize}
\begin{equation}\label{Jpm}
(J*u^p)(x)<\infty \quad\mbox{ and }\quad (J*u^m)(x)<\infty;
\end{equation}
\begin{itemize}
\item $u, v$ satisfy \eqref{GMs} pointwise.
\end{itemize}

The local version of \eqref{GMs}, in which $J*u^p$ and $J*u^m$ are replaced by $u^p$ and $u^m$ respectively, was discussed in \cite{G23} where it is shown that anti-Turing condition $\sigma<1$ is necessary for solutions to exhibit an exponential decay at infinity, see in particular \cite[Theorem 1.1]{G23}. Unlike the approach in \cite{G23}, which relies heavily on integral representation formulae for solutions of the Laplace and Schr\"odinger equation that hold in dimensions $N\geq 3$,  in the current work we resort to various integral estimates, motivated by the presence of the non-local convolution terms in \eqref{GMs}. This gives us the advantage to study the system \eqref{GMs} in all space dimensions $N\geq 1$. 

To analyze the problem \eqref{GMs}, we will initially focus on the scenario where  $J\in L^1(\R^N)$. Given that $u(x), v(x)\to 0$ as $|x|\to \infty,$   it becomes straightforward to verify that condition \eqref{Jpm} holds in this case.

\newpage

We assume that the kernel $J$ satisfies one of the following conditions:

\noindent $(E)\;$ ($J$ has exponential decay):
\smallskip

There exists $M_J>0$ such that for all $0<b<M_J$ we have 
\begin{equation}\label{E}
\intl_{\R^N}J(x-y)e^{-b|y|}dy\leq C(J,N,b) e^{-b|x|}\quad\mbox{ for all }x\in \R^N,
\end{equation}
\medskip

or

\medskip

\noindent $(P)\;$ ($J$ has power decay): 
\smallskip

For all $0<b<N$ we have 
\begin{equation}\label{P}
\intl_{\R^N}J(x-y)|y|^{-b} dy\leq C(J,N,b) (1+|x|)^{-b}\quad\mbox{ for all }x\in \R^N.
\end{equation}
As we shall see in Lemma \ref{l1} below, the kernel
$$
J(y)=(1+|y|)^{-\alpha}e^{-\beta|y|} \quad\mbox{ with }\; \alpha\geq 0\, , \beta>0,
$$
satisfies condition $(E)$ with $M_J=\beta$, while the kernel 
$$
J(y)=(1+|y|)^{-\alpha} \quad\mbox{ with }\; \alpha>N,
$$
satisfies condition $(P)$ but not condition $(E)$ above. 

 Throughout this paper, given two positive and continuous functions $f, g:\R^N\to (0, \infty)$, we use $f\simeq g$ to denote that the quotient $\frac{f}{g}$ is bounded in $\R^N$ between two positive constants.
Our first main result discusses the existence of a solution to \eqref{GMs} in the case where the convolution kernel has an exponential decay.
\begin{theorem}\label{ths}
Assume $0<\sigma\leq 1$ and $\rho\in C^{0,\gamma}(\R^N)$ satisfies 
\begin{equation}\label{ro}
\rho(x)\simeq (1+|x|)^{-a}e^{-b|x|}
\end{equation} 
for some $a\geq 0$ and $b>0$. Assume also that $J\in L^1(\R^N)$  satisfies conditions \eqref{J} and $(E)$ with 
\begin{equation}\label{mj}
M_J>(a+b)\max\{m, p\}.
\end{equation} 
Then, there exist $c,C>0$ depending on $N, m,p,q,s$, $\rho(x)$ and $J$ such that for all
\begin{equation}\label{lm}
\mu\geq C \quad \mbox{ and }\quad \lambda^{\frac{p(s+1)}{q}-m}\geq c\mu,
\end{equation}
the system \eqref{GMs} has a positive solution.  
\end{theorem}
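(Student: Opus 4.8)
The plan is to reformulate \eqref{GMs} as a single fixed‑point problem on the order interval $[\Phi,M\Phi]$ determined by explicit activator barriers, with the inhibitor slaved to the activator. First I would set $\Phi:=(-\Delta+\lambda)^{-1}\rho$, the unique decaying solution of $-\Delta\Phi+\lambda\Phi=\rho$ on $\R^N$; since \eqref{lm} forces $\lambda$ to be large one has $\Phi\simeq\lambda^{-1}\rho\simeq\lambda^{-1}(1+|x|)^{-a}e^{-b|x|}$. I would then fix the inhibitor profile $g(x):=(1+|x|)^{-am/(s+1)}e^{-\beta|x|}$ with $\beta:=bm/(s+1)$, and $\Psi:=(-\Delta+\mu)^{-1}g$, so that $-\Delta\Psi+\mu\Psi=g$ and, for $\mu$ large, $\Psi\simeq\mu^{-1}g$. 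The first technical point to establish is a pair of profile‑preserving convolution bounds,
\[
J*\Phi^p\simeq\lambda^{-p}(1+|x|)^{-ap}e^{-bp|x|},\qquad J*\Phi^m\simeq\lambda^{-m}(1+|x|)^{-am}e^{-bm|x|}=\lambda^{-m}g^{s+1},
\]
which I would deduce from \eqref{J}, the positivity of $J$, and condition $(E)$: retaining the polynomial weight through the convolution costs an extra exponential rate $ap$, resp. $am$, and this is exactly why $M_J>(a+b)\max\{m,p\}$ is assumed in \eqref{mj}.

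Next I would choose a constant $M>1$ depending only on the structural constants from the estimates above, and amplitudes $\kappa\le K$ with $\kappa,K\simeq\mu^{s/(s+1)}\lambda^{-m/(s+1)}$. For each $u$ with $\Phi\le u\le M\Phi$ the scalar equation $-\Delta w+\mu w=(J*u^m)\,w^{-s}$, $w\to0$, has a decreasing nonlinearity in $w$, hence a unique positive decaying solution $V(u)$ depending continuously on $u$; using the convolution estimates one checks that $\kappa\Psi$ and $K\Psi$ are a sub‑ and a supersolution as soon as $\mu\ge C$ and $\kappa^{s+1}\lesssim\mu^s\lambda^{-m}$, $K^{s+1}\gtrsim M^m\mu^s\lambda^{-m}$, so that $V(u)\in[\kappa\Psi,K\Psi]$. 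I would then define
\[
\Theta(u):=(-\Delta+\lambda)^{-1}\big((J*u^p)\,V(u)^{-q}+\rho\big)
\]
and try to apply Schauder's fixed‑point theorem to $\Theta$ on $[\Phi,M\Phi]$, viewed as a closed convex subset of $C(\R^N)$ with the topology of local uniform convergence.

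The invariance $\Theta([\Phi,M\Phi])\subset[\Phi,M\Phi]$ is the heart of the matter. The lower bound $\Theta(u)\ge(-\Delta+\lambda)^{-1}\rho=\Phi$ holds trivially. For the upper bound, inserting $u\le M\Phi$, $V(u)\ge\kappa\Psi$ and the bounds above reduces $\Theta(u)\le M\Phi$ to (i) boundedness of $(J*\Phi^p)\,\rho^{-1}\,\Psi^{-q}$ and (ii) a single scalar inequality $\mu^{q/(s+1)}\lambda^{-(p-mq/(s+1))}\le(M-1)/(CM^p)$. Step (i) is where $\sigma\le1$ is used sharply: it makes the exponential rate $bp-\beta q-b=b(p-1)(1-\sigma)$ nonnegative and the accompanying polynomial power nonpositive, so the quotient is bounded; note also that $\sigma\le1$ gives $p(s+1)/q-m\ge1>0$, so \eqref{lm} is meaningful. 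Step (ii), after raising to the power $(s+1)/q$, is precisely \eqref{lm} with $c=(CM^p/(M-1))^{(s+1)/q}$. Granting invariance, continuity of $\Theta$ follows from dominated convergence applied to $u\mapsto J*u^p$ (using $J\in L^1(\R^N)$) together with continuity of $V$, and compactness from elliptic regularity — $\rho\in C^{0,\gamma}$ and $J\in C^1(\R^N\setminus\{0\})$ with \eqref{J} give $J*u^p\in C^{0,\alpha}_{loc}$, so $\Theta([\Phi,M\Phi])$ is bounded in $C^{2,\alpha}_{loc}$ and uniformly dominated by $M\Phi$ at infinity, hence precompact. Schauder then yields $u^*$ with $\Phi\le u^*\le M\Phi$ and $\Theta(u^*)=u^*$; setting $v^*:=V(u^*)$, the pair $(u^*,v^*)$ solves \eqref{GMs}. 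Finally, elliptic bootstrapping gives $u^*,v^*\in C^{2,\delta}_{loc}$, positivity follows from $u^*\ge\Phi>0$ and $v^*\ge\kappa\Psi>0$, decay from the upper barriers $M\Phi$, $K\Psi$, and \eqref{Jpm} is immediate since $J\in L^1(\R^N)$ and $u^*$ is bounded.

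The main obstacle will be the invariance $\Theta(u)\le M\Phi$ for the superlinear ($p>1$) activator equation: unlike the inhibitor equation, which is sublinear and self‑adjusts, there is no free supersolution, so one must show that the nonlocal feedback $(J*u^p)V(u)^{-q}$ stays a controlled multiple of the source $\rho$. This is precisely what forces the anti‑Turing condition $\sigma\le1$ (to prevent exponential/polynomial growth of $\frac{J*\Phi^p}{\rho\,\Psi^q}$) and the quantitative balance \eqref{lm} between $\lambda$ and $\mu$ (to make the residual amplitude $\mu^{q/(s+1)}\lambda^{-(p-mq/(s+1))}$ small). A secondary, more technical hurdle is proving the profile‑preserving convolution estimates under only \eqref{J} and $(E)$.
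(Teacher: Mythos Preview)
Your proposal is correct and close in spirit to the paper's proof: the profile-preserving convolution bounds obtained from condition $(E)$ and \eqref{mj}, the use of $\sigma\le1$ to make $(J*u^p)v^{-q}$ decay at least like $\rho$, and the amplitude arithmetic producing \eqref{lm} are exactly the same in both. The packaging, however, differs in two respects. First, the paper runs the fixed point on pairs, $\mathscr{F}(u,v)=(Tu,Tv)$, with explicit two-sided barriers $d_i\Psi_{a,b}\le\cdot\le D_i\Psi_{a,b}$ (where $\Psi_{a,b}(x)=(1+|x|^2)^{-a/2}e^{-b(1+|x|^2)^{1/2}}$) for both components, whereas you slave $v=V(u)$ and work with a single map $\Theta$ on $[\Phi,M\Phi]$; since in the paper $Tv$ already depends only on $u$, the two fixed-point problems have the same fixed points and your reduction is a legitimate simplification. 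Second, the paper does \emph{not} apply a fixed-point theorem directly on $\R^N$: it explicitly notes that $\mathscr{X}$ is not compact, truncates to balls $B_n$ (extending $(u,v)$ outside $B_n$ so that the convolutions make sense), applies the classical Schauder theorem on each $B_n$, and then extracts a diagonal subsequence to pass to the limit and obtain a $C^{2,\gamma}_{loc}$ solution. Your direct appeal to Schauder--Tychonoff in the Fr\'echet space $C_{loc}(\R^N)$ is valid and bypasses the extension and diagonal machinery; the paper's route, in exchange, stays with the more elementary Banach-space Schauder theorem at the cost of that approximation argument.
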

We should note that a similar curve to \eqref{lm} was found in \cite[Theorem 1.1]{G23} for the local version of \eqref{GMs}. Since $0<\sigma\leq 1$, we see that the exponent of $\lambda$ in \eqref{lm} is positive:
$$
t:=\frac{p(s+1)}{q}-m=m\Big(\frac{1}{\sigma}-1\Big)+\frac{s+1}{q}>0.
$$
We state that the non-local effect of the convolution terms in \eqref{GMs} should be seen not in the exponent $t=\frac{p(s+1)}{q}-m$, but rather in the size of constants $C$, $c>0$ of \eqref{lm} which encompass the properties of the kernel $J$.

A similar result holds in the case where $J$ has a power decay. More precisely, we have:
\begin{theorem}\label{thp}
Assume that $0<\sigma\leq 1$,  $J\in L^1(\R^N)$  satisfies conditions \eqref{J} and  $(P)$. Assume also that
$\rho\in C^{0,\gamma}(\R^N)$ with
\begin{equation}\label{ro2}
\rho(x)\simeq (1+|x|)^{-a}\quad\mbox{ for some }\quad    \frac{\max\{\theta-1, 0\}}{\min\{m,p\}}<a< \frac{N}{\max\{m,p\}}, 
\end{equation} 
recalling that constant $\theta$ is given by  \eqref{J}.

Then, there exist $c,C>0$ depending on $N, m,p,q,s,\rho(x)$ and $J$ such that for all $\lambda$ and $\mu$ that satisfy \eqref{lm}, 
the system \eqref{GMs} has a positive solution.  
\end{theorem}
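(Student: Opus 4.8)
The argument follows the same lines as the proof of Theorem~\ref{ths}, with the exponential weight $(1+|x|)^{-a}e^{-b|x|}$ replaced throughout by the pure power weight $\langle x\rangle^{-a}$, where $\langle x\rangle:=(1+|x|^{2})^{1/2}$, and condition $(E)$ replaced by condition $(P)$. We obtain the solution from Schauder's fixed point theorem applied to the decoupling map $\Phi(u,v)=(\tilde u,\tilde v)$, where $\tilde u,\tilde v$ are the unique bounded solutions, vanishing at infinity, of
\[
-\Delta\tilde u+\lambda\tilde u=\frac{J*u^{p}}{v^{q}}+\rho,\qquad -\Delta\tilde v+\mu\tilde v=\frac{J*u^{m}}{v^{s}}\quad\text{in }\R^{N},
\]
i.e.\ the convolutions of the right-hand sides with the Yukawa kernels $G_{\lambda},G_{\mu}$. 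The guiding ansatz is $u\simeq\langle x\rangle^{-a}$ and $v\simeq\langle x\rangle^{-e}$ with $e:=am/(s+1)$; the anti-Turing condition $\sigma\le1$ is precisely what makes these two exponents compatible, since it gives $ap-eq\ge a>0$ while $am-es=e$, so that the first right-hand side decays like $\rho$ and the second like $\langle x\rangle^{-e}$.

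The construction rests on two weighted estimates. First, if $0<w\simeq\langle x\rangle^{-a}$ and $\kappa\in\{m,p\}$, then condition $(P)$ applied with $b=a\kappa$ — admissible because $a<N/\max\{m,p\}$ forces $a\kappa<N$ — together with the positivity of $J$, yields $J*w^{\kappa}\simeq\langle x\rangle^{-a\kappa}$; moreover the gradient bound in \eqref{J}, combined with the lower restriction $a\min\{m,p\}>\max\{\theta-1,0\}$, ensures that the tail integral $\int_{|z|>1}|z|^{\theta-N-1}w^{\kappa}(x-z)\,dz$ appearing in the mean-value estimate of $J*w^{\kappa}$ is finite, so that $J*w^{\kappa}\in C^{0,\delta}_{loc}$; this is the role played here by hypothesis \eqref{mj} in the exponential case. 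Second, if $0<f\simeq\langle x\rangle^{-\alpha}$ with $0<\alpha<N$, then the exponential decay of $G_{\nu}$ gives $G_{\nu}*f\simeq\langle x\rangle^{-\alpha}$, with comparison constants of order $\nu^{-1}$ for $\nu$ large. With these, one checks that $\Phi$ is well defined on the order interval
\[
K:=\bigl\{(u,v)\in C(\R^{N})\times C(\R^{N}):\ \underline u\le u\le\bar u,\ \underline v\le v\le\bar v\bigr\},
\]
with $\bar u:=A\langle x\rangle^{-a}$, $\underline u:=G_{\lambda}*\rho$, $\bar v:=B\langle x\rangle^{-e}$, $\underline v:=\beta\langle x\rangle^{-e}$, and that $\Phi$ is continuous and compact there — continuity by dominated convergence (using $J\in L^{1}$ and the decay of $G_{\nu}$), compactness by interior elliptic estimates together with the uniform decay rates $\langle x\rangle^{-a}$, $\langle x\rangle^{-e}$ of the images (Arzel\`a--Ascoli plus a tail estimate).

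Since each of $\tilde u,\tilde v$ is increasing in $u$ and decreasing in $v$, the extreme values over $K$ of the two components are attained at the corners $(\bar u,\underline v)$ and $(\underline u,\bar v)$, so $\Phi(K)\subseteq K$ is equivalent to the four inequalities
\begin{gather*}
-\Delta\bar u+\lambda\bar u\ge\frac{J*\bar u^{p}}{\underline v^{q}}+\rho,\qquad
-\Delta\underline u+\lambda\underline u\le\frac{J*\underline u^{p}}{\bar v^{q}}+\rho,\\[4pt]
-\Delta\bar v+\mu\bar v\ge\frac{J*\bar u^{m}}{\underline v^{s}},\qquad
-\Delta\underline v+\mu\underline v\le\frac{J*\underline u^{m}}{\bar v^{s}},
\end{gather*}
together with $\underline u\le\bar u$ and $\underline v\le\bar v$. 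The second holds for free, since $-\Delta\underline u+\lambda\underline u=\rho$ and the convolution term is nonnegative. Inserting the ansatz into the remaining three and invoking the two weighted estimates, the $\langle x\rangle$-powers on the two sides of each inequality balance (this is exactly where $\sigma\le1$ enters), reducing each to an inequality between the constants $A,B,\beta$ and the parameters; a short computation shows that all of them — and the constraints $\underline u\le\bar u$, $\underline v\le\bar v$ — can be met simultaneously precisely when $\lambda,\mu$ satisfy \eqref{lm}, with the choices $A\simeq\lambda^{-1}$ and $B\simeq\beta\simeq(\lambda^{m}\mu)^{-1/(s+1)}$, the condition $\mu\ge C$ recording the largeness of the parameters needed for the second weighted estimate to hold with the stated constants. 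Schauder's theorem then provides a fixed point $(u,v)\in K$, which is a positive pair with $u,v\to0$ solving \eqref{GMs}; condition \eqref{Jpm} holds automatically because $J\in L^{1}(\R^{N})$ and $u$ is bounded, and a bootstrap using the local H\"older continuity of $J*u^{p}$, $J*u^{m}$ and of $\rho$ upgrades $(u,v)$ to $C^{2,\delta}_{loc}$, so the solution is classical.

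The main obstacle, exactly as for Theorem~\ref{ths}, is that the non-local terms destroy the pointwise maximum principle, so the whole sub/supersolution bookkeeping has to be run through the integral (Yukawa potential) formulation and the $(P)$-type convolution estimate rather than by comparison. Within that, the delicate step is the simultaneous calibration of $A,B,\beta$ against $\lambda,\mu$ so that all four differential inequalities hold on the whole of $\R^{N}$: it is this calibration that forces the curve \eqref{lm}, and it requires keeping careful track of how the comparison constants in the Yukawa estimate degenerate as $\lambda,\mu\to\infty$.
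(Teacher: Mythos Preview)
Your sketch is correct and follows the same overall strategy as the paper --- Schauder fixed point on an order interval built from the power weight $\langle x\rangle^{-a}$, with the anti-Turing condition making the exponents close up and the calibration of the barrier constants producing exactly the curve \eqref{lm}. Your identification of the roles of the two bounds in \eqref{ro2} (the upper one to apply $(P)$ with $b=a\kappa<N$, the lower one to get local H\"older regularity of $J*u^\kappa$) matches the paper's use of them.

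There are, however, two genuine structural differences worth noting. First, your decoupling is fully linear: you set $\tilde v=G_\mu*\bigl[(J*u^m)/v^{s}\bigr]$, whereas the paper keeps the singular nonlinearity and lets $Tv$ solve $-\Delta Tv+\mu Tv=(J*u^m)/(Tv)^{s}$, invoking a separate existence lemma for that scalar singular problem. Your choice is more elementary --- it bypasses the singular sub/supersolution machinery entirely --- at the cost of carrying four barrier inequalities instead of three (in the paper's version the lower $v$-barrier is produced automatically by the singular comparison lemma). Second, the paper does not apply Schauder directly on $\R^N$; it runs the fixed-point argument on each ball $B_n$, extracts by a diagonal argument, and passes to the limit. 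Your direct approach on $\R^N$ is legitimate, but the compactness step (Arzel\`a--Ascoli with a tail estimate in $C_0(\R^N)$) and the continuity step (dominated convergence alone gives pointwise, not uniform, convergence of $\Phi$) need to be made precise; the paper's ball approximation sidesteps both issues. Finally, your remark that ``the non-local terms destroy the pointwise maximum principle'' is slightly off: the linear operator $-\Delta+\lambda$ still enjoys the maximum principle, and the paper uses it repeatedly; what is lost is only a direct comparison principle for the full coupled system, which neither approach needs.
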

If $0<\theta\leq 1$, then the condition \eqref{ro2} on the exponent $a$ reads $0<a<N/\max\{m,p\}$.

In our next result we shall assume $J(y)=|y|^{\theta-N}$, for some $0<\theta<N$. Note that in this case \eqref{J} clearly holds but $J\not\in L^r(\R^N)$ for all $r\geq 1$. We have:

\begin{theorem}\label{thl}
Assume $J(y)=|y|^{\theta-N}$ with $0<\theta<N$ and that
$\rho\in C^{0,\gamma}(\R^N)$ with
\begin{equation}\label{ro3}
\rho(x)\simeq (1+|x|)^{-a}\quad\mbox{ for some }\quad  a> 0.
\end{equation} 
\begin{enumerate}
\item[{\rm (i)}] If $\displaystyle a\le\frac{\theta}{\min\{m,p\}}$, then \eqref{GMs} has no positive solutions;

\item[{\rm (ii)}] If $0<\sigma< 1$ and the following two conditions hold 
\begin{equation}\label{na1}
\frac{\theta}{\min\{ m, p\}}<a<\frac{N}{\max\{ m, p\}},
\end{equation}
\begin{equation}\label{na2}
a(p-1)(1-\sigma)\geq \theta\Big(1-\frac{q}{s+1}\Big),  
\end{equation}
then, there exist $c,C>0$ depending on $N, m,p,q,s, \theta$ and $\rho(x)$ such that if $\lambda$ and $\mu$ satisfy \eqref{lm}, 
then system \eqref{GMs} has a positive solution.  
\end{enumerate}
\end{theorem}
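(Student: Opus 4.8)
The plan is to treat parts (i) and (ii) separately. For the non-existence part (i), I would argue by contradiction: suppose $(u,v)$ is a positive classical solution. Since $J(y)=|y|^{\theta-N}$, the convolution $J*u^p$ near a fixed point $x$ is controlled from below by the mass of $u^p$ in a unit ball around $x$, so if $u$ decays like $(1+|x|)^{-\kappa}$ with $\kappa$ too small relative to $\theta$, then $J*u^p$ decays no faster than $(1+|x|)^{\theta - \min\{m,p\}\kappa}$-type behaviour (using the Riesz-potential estimate $\int |x-y|^{\theta-N}(1+|y|)^{-c}\,dy \simeq (1+|x|)^{\theta-c}$ when $c<N$ and $c>\theta$). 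The key mechanism is that the source term $\rho(x)\simeq(1+|x|)^{-a}$ forces a lower bound $u(x)\gtrsim(1+|x|)^{-a}$ via the maximum principle / resolvent representation for $-\Delta+\lambda$, because $-\Delta u+\lambda u\geq\rho$. Feeding this lower bound into $J*u^{\min\{m,p\}}$, when $a\leq\frac{\theta}{\min\{m,p\}}$ the convolution fails to decay (or decays too slowly), which makes the right-hand side of either the $u$- or $v$-equation non-integrable against the relevant kernel, contradicting $v(x)\to 0$ or the finiteness condition \eqref{Jpm}; the precise clash I expect to exploit is that $v$ then cannot decay while staying positive, or $u$ is forced to grow. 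I would isolate this as the first lemma.

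For the existence part (ii), I would mimic the scheme that presumably underlies Theorems \ref{ths} and \ref{thp}: construct an ordered pair of sub/supersolutions of the form $\underline u=\underline c\,(1+|x|)^{-a}$, $\overline u=\overline c\,(1+|x|)^{-a}$, and $\underline v,\overline v$ of matching polynomial type, then run a monotone iteration / Schauder fixed-point argument in $C^{2,\delta}_{loc}(\R^N)$. The engine is the Riesz-potential mapping estimate: under \eqref{na1}, both exponents $ma$ and $pa$ lie strictly between $\theta$ and $N$ (that is exactly what $\frac{\theta}{\min\{m,p\}}<a<\frac{N}{\max\{m,p\}}$ buys), so $J*u^p\simeq(1+|x|)^{\theta-pa}$ and $J*u^m\simeq(1+|x|)^{\theta-ma}$ are again honest polynomially decaying functions with controllable exponents. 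One then checks that $\frac{J*\overline u^{\,p}}{\underline v^{\,q}}+\rho\leq\lambda\overline u-\Delta\overline u$ and the reverse for the subsolution, and similarly for the $v$-equation; matching the powers of $(1+|x|)$ on both sides is where conditions \eqref{na1}–\eqref{na2} enter. Condition \eqref{na2}, rewritten, is precisely the inequality that makes the $v$-profile exponent consistent with what the $u$-equation demands: after computing the forced decay rate of $v$ from its own equation, $v\simeq(1+|x|)^{\theta-ma+?}$, one needs $\theta-pa-q\cdot(\text{decay of }v)\leq -a$ so that $J*u^p/v^q$ does not overwhelm $\rho\simeq(1+|x|)^{-a}$; unwinding the algebra with $\sigma=\frac{mq}{(p-1)(s+1)}$ yields \eqref{na2}. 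The constants $c,C$ in \eqref{lm} come from absorbing the prefactors: $\mu$ large makes $\overline v$ small (hence $\underline v^{-q}$ controlled), and the curve $\lambda^{t}\gtrsim\mu$ with $t=\frac{p(s+1)}{q}-m>0$ balances the activator equation exactly as in Theorem \ref{ths}.

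The main obstacle I anticipate is the bookkeeping in part (ii): there are two coupled profiles with four free exponents on each side of two inequalities, and one must verify that the algebraic window carved out by \eqref{na1} together with $0<\sigma<1$ is nonempty and that \eqref{na2} is the *sharp* closing condition rather than an artifact — i.e., that within the strip \eqref{na1} one can genuinely choose the $v$-exponent so all four sub/supersolution inequalities hold simultaneously for suitable $\underline c<\overline c$ and suitable $\lambda,\mu$. A secondary technical point is regularity: the convolution term only sees $J\in C^1(\R^N\setminus\{0\})$ with the singular kernel $|y|^{\theta-N}$, so to land the fixed point in $C^{2,\delta}_{loc}$ I would need a Hölder estimate on $J*u^p$, which follows from $|\nabla J(z)|\leq c|z|^{\theta-N-1}$ and $\theta>0$ by splitting the convolution integral near and away from the singularity — routine but it must be stated. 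For part (i), the subtlety is making the lower bound $u\gtrsim(1+|x|)^{-a}$ rigorous in $\R^N$ for all $N\geq 1$ without the dimension-restricted representation formulae of \cite{G23}; I would get it from comparison with the bounded solution of $-\Delta w+\lambda w=\rho$ whose decay matches $\rho$'s, using that $\rho\gtrsim(1+|x|)^{-a}$ and a barrier argument.
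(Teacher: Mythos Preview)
Your proposal is correct and follows essentially the same route as the paper: part (i) via the comparison lower bound $u\gtrsim\Phi_a$ (from $-\Delta u+\lambda u\geq\rho$ and Lemma~\ref{compp}) and then showing $J*u^{\min\{m,p\}}=\infty$, contradicting \eqref{Jpm}; part (ii) by rerunning the proof of Theorem~\ref{thp} with the Riesz-potential estimates $J*\Phi_{ap}\simeq\Phi_{ap-\theta}$, $J*\Phi_{am}\simeq\Phi_{am-\theta}$ (Lemma~\ref{lgl}), the $v$-exponent fixed as $A=\frac{am-\theta}{s+1}$, and the algebraic check $ap-Aq-\theta\geq a$ which is exactly \eqref{na2}. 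The only sharpening needed is in part (i): the convolution does not merely ``fail to decay'' but is identically $+\infty$ once $a\min\{m,p\}\leq\theta$ (via $\int_{|y|>2|x|}|y|^{\theta-N-am}\,dy=\infty$), and this direct violation of \eqref{Jpm} is the contradiction the paper uses.
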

The effect of the non-local terms in \eqref{GMs} is more visible in Theorem \ref{thl}, where the kernel $J$ is not integrable in any Lebesgue space. Indeed, Theorem \ref{thl}(i) states that no positive solutions to \eqref{GMs} exist if the exponents $p$ and $m$ of the activator terms are  small. In turn, if these exponents are sufficiently large, then a positive solution exist provided \eqref{na1} and \eqref{na2} are satisfied. Let us note that condition \eqref{na1} is stronger than \eqref{ro2}. Also,  if $q\geq s+1$, then \eqref{na2} is automatically fulfilled.

Our proofs rely on the integral properties of the convolution $J*f$ as we describe in Section 3 combined with fixed point theorems and the features of the problem
\begin{equation}\label{www}
\begin{cases}
-\Delta w+\mu w=\frac{K(x)}{ w^{s}}, w>0 &\quad\mbox{ in }\R^N,\\
\;\;\; w(x)\to 0& \quad\mbox{ as }|x|\to \infty,
\end{cases}
\end{equation}
where $K\in C^{0, \gamma}_{loc}(\R^N)$ is positive and satisfies $K(x)\simeq (1+|x|)^{-a} e^{-b|x|}$ for some $a, b\geq 0$ and $s\geq 0.$

The remainder of this article is organized as follows. In Section 3 we collect some preliminary results.
First, we derive in Lemma \ref{l2} that under \eqref{J}, the convolution operator $\mathcal{T}f:=J*f$ maps $L^r(\R^N)$ into $C^{0, \nu}(\R^N)$ for some suitable $r>1$ and $\nu\in (0,1)$. This is a crucial point in our study of classical solutions to \eqref{GMs} and allows us to deduce the existence of positive solutions in the H\"older space $C^{2, \nu}_{loc}(\R^N)$ for some $\nu\in (0,1)$. The technical proof of Lemma \ref{l2} is postponed to the Appendix section. Another set of preliminary results discussed in Section 3 pertains to problem \eqref{www}. Next, in Sections 4, 5 and 6 we prove Theorems \ref{ths}, \ref{thp}  and \ref{thl}, respectively. Finally, we close this article with Section 7, which offers a discussion on our main results.

\section{Preliminary results}

\subsection{Properties of $J$-convolution}

This subsection is devoted to the study of the convolution operator  $J*f$. We start first with the following result that underlines the need for condition \eqref{J} in our study.  

\begin{lemma}\label{l2}
Assume $J\in C^1(\R^N\setminus\{0\})$ is positive and satisfies \eqref{J}.  Let $r> 1$ be such that $\theta-1<\frac{N}{r}<\theta$ and define
$$
\mathcal{T}f(x):=(J*f)(x)=\int_{\R^N} J(x-z)f(z) dz \quad \mbox{ for all }f\in L^r(\R^N). 
$$
Then, there exists $C=C(N, \theta, r)>0$ such that 
\begin{equation}\label{hol}
|\mathcal{T}f(x)-\mathcal{T}f(y)|\leq C|x-y|^{\theta-\frac{N}{r}}\|f\|_{L^r(\R^N)}\quad\mbox{ for all }x, y\in\R^N.
\end{equation}
\end{lemma}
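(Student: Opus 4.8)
Looking at Lemma \ref{l2}, I need to prove a Hölder estimate for the convolution operator $\mathcal{T}f = J*f$ when $J$ satisfies the pointwise bounds \eqref{J}.

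\medskip

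The plan is to estimate $\mathcal{T}f(x) - \mathcal{T}f(y) = \int_{\R^N}\bigl(J(x-z)-J(y-z)\bigr)f(z)\,dz$ by splitting the domain of integration according to the distance of $z$ from the two points $x,y$. Set $d = |x-y|$ and let $B = B(x, 2d)$ (equivalently one could center at the midpoint). I would write the integral as $I_1 + I_2$ where $I_1$ is over $B$ and $I_2$ over $\R^N\setminus B$. In both pieces I apply Hölder's inequality with exponent $r$ on $f$ and the conjugate exponent $r'$ on the kernel difference, so the whole point becomes bounding $\|J(x-\cdot)-J(y-\cdot)\|_{L^{r'}(\text{region})}$ by a constant times $d^{\theta - N/r}$. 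Note $\theta - N/r = \theta - N/r$ and since $\theta - 1 < N/r < \theta$ we have $0 < \theta - N/r < 1$, so this is a genuine Hölder exponent; also $(N-\theta)r' < N$ and $(N-\theta+1)r' > N$, inequalities I will use for convergence of the relevant integrals.

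\medskip

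For $I_1$ (the near region) I do not use the gradient bound; instead I bound $|J(x-z)-J(y-z)| \le J(x-z) + J(y-z) \le c|x-z|^{\theta-N} + c|y-z|^{\theta-N}$ and integrate each term over $B$. Since $(N-\theta)r' < N$, the function $w\mapsto |w|^{(\theta-N)r'}$ is locally integrable, and integrating over a ball of radius $\sim d$ gives $\int_{|w|\lesssim d}|w|^{(\theta-N)r'}\,dw \lesssim d^{\,N - (N-\theta)r'}$; taking the $1/r'$ power yields $d^{\,N/r' - (N-\theta)}  = d^{\,\theta - N/r}$ (using $N/r' = N - N/r$), which is exactly the bound sought. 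For $I_2$ (the far region, $|x-z|\gtrsim d$ and $|y-z|\gtrsim d$) I use the gradient bound via the mean value inequality along the segment from $x-z$ to $y-z$: every point $\xi$ on that segment satisfies $|\xi|\gtrsim |x-z|$ (since $|x-z|\ge 2d \ge 2|x-y|$ keeps the segment away from the origin), hence $|J(x-z)-J(y-z)| \le d\sup_{\xi}|\nabla J(\xi)| \lesssim d\,|x-z|^{\theta-N-1}$. Then $\int_{|x-z|\gtrsim d}\bigl(d|x-z|^{\theta-N-1}\bigr)^{r'}dz = d^{r'}\int_{|w|\gtrsim d}|w|^{(\theta-N-1)r'}\,dw$, which converges because $(N+1-\theta)r' > N$, and equals a constant times $d^{r'}\cdot d^{\,N - (N+1-\theta)r'} = d^{\,N - N r' + \theta r'}$; the $1/r'$ power is again $d^{\,\theta - N/r}$. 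Combining $I_1$ and $I_2$ and absorbing $\|f\|_{L^r}$ gives \eqref{hol}.

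\medskip

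The main obstacle is purely bookkeeping: making sure the region decompositions are compatible (e.g. that on the far region the whole segment $[x-z, y-z]$ avoids a neighborhood of the origin so the gradient bound is legitimate, which is where the factor $2$ in the radius of $B$ matters), and tracking the exponents carefully so that both the near-region integrability $(N-\theta)r' < N$ and the far-region integrability $(N-\theta+1)r' > N$ are used and both produce the same power $\theta - N/r$. These two integrability facts are exactly equivalent to the hypothesis $\theta - 1 < N/r < \theta$. Since this is essentially a Riesz-potential-type estimate, I expect no conceptual difficulty beyond organizing the case split cleanly; accordingly the detailed computation is deferred to the Appendix as the excerpt indicates.
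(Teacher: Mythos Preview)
Your proposal is correct and follows essentially the same route as the paper's proof in the Appendix: the paper also splits at $|z-x|<2|x-y|$ versus $|z-x|\ge 2|x-y|$, bounds the near part by estimating $\mathcal{T}_1(x)$ and $\mathcal{T}_1(y)$ separately via H\"older and the pointwise bound $J\le c|\cdot|^{\theta-N}$, and handles the far part with the mean value theorem together with $|\nabla J|\le c|\cdot|^{\theta-N-1}$ and the observation $|z-\xi|\ge |z-x|/2$.
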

The proof of Lemma \ref{l2} is rather technical and we shall postpone it to the Appendix section. 

Our next result provides some properties of kernels $J$ which satisfy condition $(E)$. 
\begin{lemma}\label{l1}
\begin{enumerate}
\item[\rm (i)] The kernel $J(y)=(1+|y|)^{-\alpha}e^{-\beta|y|}$, $\alpha\geq 0$, $\beta>0$  satisfies condition $(E)$ with $M_J=\beta$.
\item[\rm (ii)] Suppose the kernel $J$ satisfies condition $(E)$ and let $f\in L^1(\R^N)$ be such that 
$$
f(x)\simeq (1+|x|)^{-a}e^{-b |x|},
$$
for some $a,b\geq 0$ with $a+b<M_J$. Then, there exist two constants $C_0>c_0>0$ such that for all $x\in \R^N$ we have 
\begin{equation}\label{kernimp}
c_0 (1+|x|)^{-a}e^{-b |x|} \leq \intl_{|y|<1}  J(y) f(x-y) dy\leq (J*f)(x)\leq C_0 (1+|x|)^{-a}e^{-b |x|}.
\end{equation}
In particular,
\begin{equation}\label{kernj}
(J * f)(x) \simeq (1+|x|)^{-a}e^{-b |x|}.
\end{equation}
\end{enumerate}
\end{lemma}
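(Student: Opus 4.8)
The plan is to prove the two parts of Lemma~\ref{l1} in order, since part (ii) will use part (i) only through the abstract hypothesis $(E)$.

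For part (i), fix $0<b<\beta$ and set $b'=\tfrac{b+\beta}{2}\in(b,\beta)$. I would split the integral $\int_{\R^N}J(x-y)e^{-b|y|}\,dy$ according to whether $|y|\le |x|/2$ or $|y|>|x|/2$. On the region $|y|>|x|/2$ one uses $e^{-b|y|}\le e^{-b|x|/2}$ together with $\int_{\R^N}J(z)\,dz<\infty$ (which holds because $J(z)\le c|z|^{\theta-N}$ for $|z|\le1$ gives integrability near the origin, and the exponential factor $e^{-\beta|z|}$ controls the tail) to bound that piece by a constant times $e^{-b|x|/2}$. Actually it is cleaner to do the standard convolution-of-exponentials estimate directly: on $|y|\le|x|/2$ we have $|x-y|\ge|x|/2$, so $J(x-y)=(1+|x-y|)^{-\alpha}e^{-\beta|x-y|}\le e^{-(\beta-b')|x|/2}(1+|x-y|)^{-\alpha}e^{-b'|x-y|}$, and then $e^{-b'|x-y|}e^{-b|y|}\le e^{-b|x-y|}e^{-b|y|}\le e^{-b|x|}$ by the triangle inequality, leaving a convergent integral $\int (1+|x-y|)^{-\alpha}e^{-(b'-b)|x-y|}\,dy$ after absorbing $e^{-(\beta-b')|x|/2}\le 1$; on $|y|>|x|/2$ one writes $e^{-b|y|}=e^{-b|x|/2}\cdot e^{-b(|y|-|x|/2)}\le e^{-b|x|}e^{b|x|/2}e^{-b|y|}$... the bookkeeping here is routine, and the upshot is the bound $C(J,N,b)e^{-b|x|}$ claimed in $(E)$. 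The constant depends on $\alpha,\beta,N,b$ as required.

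For part (ii), the upper bound is immediate: since $f(x)\simeq(1+|x|)^{-a}e^{-b|x|}$, there is $C_1>0$ with $f(z)\le C_1(1+|z|)^{-a}e^{-b|z|}$, and by hypothesis $(E)$ applied to this comparison function — note $(1+|z|)^{-a}e^{-b|z|}\le e^{-b|z|}$ and more relevantly one applies $(E)$ with exponent $b$ after absorbing the polynomial, but the precise statement of $(E)$ is for pure exponentials; so I would instead invoke a polynomial-times-exponential version. The clean route is: $(1+|z|)^{-a}e^{-b|z|}\le e^{-b'|z|}$ for suitable $b'$ only fails for small $|z|$, so better to keep $a$ and use that $(E)$ with exponent $b$ combined with the elementary inequality $(1+|x-y|)^{-a}\le (1+|x|)^{-a}(1+|y|)^{a}$... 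Here is the main obstacle and where care is needed: propagating the \emph{polynomial} prefactor $(1+|x|)^{-a}$ through the convolution. I would handle it by writing, for the upper bound,
\[
(J*f)(x)\le C_1\int_{\R^N}J(x-y)(1+|y|)^{-a}e^{-b|y|}\,dy,
\]
splitting into $|y|\le|x|/2$ and $|y|>|x|/2$. On $|y|\le|x|/2$: $(1+|y|)^{-a}\le 1$ and $|x-y|\ge|x|/2$ so $J(x-y)$ carries a factor $e^{-b|x-y|}\le e^{-b|x|/4}e^{-b|x-y|/2}$; combined with $\int J(z)^{1/2}\cdots$ — cleaner: use that $J(x-y)e^{-b|y|}\le$ (by the same triangle-inequality trick as in (i)) $C e^{-b|x|}\cdot[\text{integrable in }y]$, and separately $e^{-b|x|}\le 2^a(1+|x|/2)^{-a}(1+|x|)^a e^{-b|x|}\cdot(1+|x|)^{-a}$ is false; the honest fix is that on $|y|>|x|/2$ we have $(1+|y|)^{-a}\le 2^a(1+|x|)^{-a}$ directly, and on $|y|\le|x|/2$ we have $|x-y|\ge|x|/2$ hence $(1+|x-y|)^{-a}\ge (1+|x|)^{-a}/2^a$ is the \emph{wrong} direction — so on that region I instead use the gradient/decay of $J$ together with $J(x-y)\le c|x-y|^{\theta-N}$... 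In the interest of not grinding this out: the correct and standard argument is to bound $(1+|y|)^{-a}e^{-b|y|}\le (1+|x|)^{a}(1+|x-y|)^{-a}\,(1+|y|)^{-a}e^{-b|y|}\cdot(1+|x|)^{-a}$ using $(1+|y|)(1+|x-y|)\ge 1+|x|$ (triangle inequality), i.e. $(1+|y|)^{-a}\le (1+|x-y|)^{a}(1+|x|)^{-a}$, and similarly $e^{-b|y|}\le e^{b|x-y|}e^{-b|x|}$, so that
\[
(J*f)(x)\le C_1(1+|x|)^{-a}e^{-b|x|}\int_{\R^N}J(x-y)(1+|x-y|)^{a}e^{b|x-y|}\,dy,
\]
and the remaining integral is finite \emph{uniformly in $x$} because $J(z)(1+|z|)^a e^{b|z|}\le c|z|^{\theta-N}(1+|z|)^ae^{b|z|}\le$ (for $J=(1+|z|)^{-\alpha}e^{-\beta|z|}$ in part (i)'s kernel, but here $J$ is only assumed to satisfy $(E)$) — hmm, for a general $(E)$-kernel this last integral need not converge. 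So for the truly general case I revert to applying $(E)$ directly to the function $g(z):=(1+|z|)^{-a}e^{-b|z|}$ after noting $g$ itself need not be a pure exponential; the remedy is to observe that condition $(E)$ as stated, applied with a slightly larger exponent is not available, so instead I use: choose $b'\in(b+a\cdot 0,\dots)$ — simplest of all, since $a\ge0$ and $a+b<M_J$, pick any $b''$ with $a+b<b''<M_J$; then for $|z|\ge R_0$ (some radius), $(1+|z|)^{-a}e^{-b|z|}\le e^{-b''|z|}$, and for $|z|<R_0$ it is $\le C_{R_0}e^{-b''|z|}$ as well (both sides bounded below), so $g(z)\le C_{R_0}e^{-b''|z|}$ globally; applying $(E)$ with exponent $b''$ gives $(J*g)(x)\le C e^{-b''|x|}\le C(1+|x|)^{-a}e^{-b|x|}$ for $|x|\ge$ some radius, and for bounded $x$ the bound is trivial since $J*g$ is continuous and bounded. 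This yields the upper bound with a uniform $C_0$.

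For the lower bound in part (ii), I only need the middle inequality $\int_{|y|<1}J(y)f(x-y)\,dy\le (J*f)(x)$, which is trivial by positivity, and then the genuine lower bound
\[
\int_{|y|<1}J(y)f(x-y)\,dy\ge c_1\int_{|y|<1}J(y)(1+|x-y|)^{-a}e^{-b|x-y|}\,dy
\]
using $f(z)\ge c_1(1+|z|)^{-a}e^{-b|z|}$. For $|y|<1$ the triangle inequality gives $|x-y|\le |x|+1\le 2(1+|x|)$ and $e^{-b|x-y|}\ge e^{-b}e^{-b|x|}$, $(1+|x-y|)^{-a}\ge (2(1+|x|))^{-a}=2^{-a}(1+|x|)^{-a}$; hence the integrand is $\ge 2^{-a}e^{-b}(1+|x|)^{-a}e^{-b|x|}J(y)$, and $\int_{|y|<1}J(y)\,dy=:\kappa\in(0,\infty)$ (finite because $J(y)\le c|y|^{\theta-N}$ with $\theta>0$, positive because $J>0$). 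This gives the lower bound with $c_0=c_1\,2^{-a}e^{-b}\kappa$. Combining the three inequalities yields \eqref{kernimp}, and \eqref{kernj} is just the two-sided bound restated with the $\simeq$ notation. The only real subtlety, as flagged above, is matching the polynomial-times-exponential decay of $f$ and $\rho$ to the pure-exponential form in hypothesis $(E)$; once one absorbs the polynomial into a slightly steeper exponential on the far field and handles the near field by continuity/compactness, everything else is bookkeeping.
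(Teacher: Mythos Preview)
Your lower bound in part~(ii) is correct and matches the paper's argument. Part~(i) can be made to work along your lines, though the write-up is needlessly complicated and the region $|y|>|x|/2$ is left unfinished; the paper dispatches the whole thing in one line by using $|y|\ge|x|-|x-y|$ to get $e^{-\beta|x-y|-b|y|}\le e^{-b|x|}e^{-(\beta-b)|x-y|}$ globally, with no splitting at all.

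The genuine gap is in your upper bound for part~(ii). After correctly abandoning the route via $\int_{\R^N} J(z)(1+|z|)^{a}e^{b|z|}\,dz$ (which indeed need not converge for a general $(E)$-kernel), you assert that for $b''\in(a+b,M_J)$ one has, for $|z|$ large,
\[
(1+|z|)^{-a}e^{-b|z|}\le e^{-b''|z|}.
\]
This inequality is false: since $b''>b$, the ratio $(1+|z|)^{-a}e^{(b''-b)|z|}\to+\infty$. A polynomial prefactor cannot be traded for a strictly steeper exponential. The most you get by absorption is $g(z):=(1+|z|)^{-a}e^{-b|z|}\le e^{-b|z|}$, and applying $(E)$ with exponent $b$ then yields only $(J*g)(x)\le Ce^{-b|x|}$, which is \emph{weaker} than the required $C_0(1+|x|)^{-a}e^{-b|x|}$; the polynomial factor is lost.

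The paper recovers the polynomial factor by a different device. It splits $\int_{\R^N} J(x-y)g(y)\,dy$ into $\{|y|>|x|\}$, where $g(y)\le (1+|x|)^{-a}e^{-b|x|}$ and $J\in L^1(\R^N)$ finishes it, and $\{|y|\le|x|\}$. On the inner region it picks $\delta\in(a+b,M_J)$ and writes
\[
g(y)=\big[(1+|y|)^{-a}e^{(\delta-b)|y|}\big]\cdot e^{-\delta|y|}.
\]
The key observation is that $t\mapsto(1+t)^{-a}e^{(\delta-b)t}$ is \emph{increasing} on $[0,\infty)$ precisely because $\delta-b>a$, so on $\{|y|\le|x|\}$ the bracketed factor is bounded by its value at $|x|$; one then applies $(E)$ with exponent $\delta$ to the remaining $e^{-\delta|y|}$. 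This is exactly where the hypothesis $a+b<M_J$ enters and how the polynomial prefactor survives the convolution.
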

\begin{proof}
(i) Take $x\in \R^N$ and $0<b<\beta$. Using $|y|\geq |x|-|x-y|$ we estimate
$$
\begin{aligned}
\int_{\R^N}J(x-y)e^{-b|y|}dy&= \int_{\R^N}(1+|x-y|)^{-\alpha}e^{-\beta|x-y|-b|y|}dy\\
&\leq  \int_{\R^N} e^{-\beta|x-y|-b|y|}dy\\
&\leq \int_{\R^N}e^{-\beta|x-y|-b(|x|-|x-y|)}dy\\
&= e^{-b|x|} \int_{\R^N}e^{-(\beta-b)|x-y|}dy\\
&= e^{-b|x|} \int_{\R^N}e^{-(\beta-b)|z|} dz\\
&= C e^{-b|x|}. 
\end{aligned}
$$
(ii) Take $x\in \R^N$ and let us fix $\delta>0$ such that $a+b<\delta< M_J$. We decompose
\begin{equation}\label{kernj1}
\int_{\R^N} J(x-y) (1+|y|)^{-a}e^{-b|y|} dy:=A_1+A_2,
\end{equation}
where
$$
\begin{aligned}
A_1&=\int_{|y|>|x|} J(x-y) (1+|y|)^{-a}e^{-b |y|} dy,\\[0.1in]
A_2&=\int_{|y|\leq |x|} J(x-y) (1+|y|)^{-a}e^{-b |y|} dy.
\end{aligned}
$$
To estimate $A_1$ we use the fact that $J\in L^1(\R^N)$ and we have 
\begin{equation}\label{kernj2}
\begin{aligned}
A_1&\leq (1+|x|)^{-a}e^{-b |x|}  \int_{|y|>|x|} J(x-y) dy\\
&\leq (1+|x|)^{-a}e^{-b |x|}  \int_{\R^N} J(z) dz\\
&=C (1+|x|)^{-a}e^{-b |x|}.
\end{aligned}
\end{equation}
To estimate $A_2$ we use the fact that $[0, \infty)\ni t\longmapsto (1+t)^{-a}e^{(\delta-b)t}$ is increasing (since $\alpha+\beta<\delta$) and we have 
$$
\begin{aligned}
A_2& = \int_{|y|\leq |x|} J(x-y) (1+|y|)^{-a}e^{(\delta-b)|y|} e^{-\delta |y|} dy\\
&\leq (1+|x|)^{-a}e^{(\delta-b)|x|}  \int_{|y|\leq |x|} J(x-y) e^{-\delta |y|} dy\\
&\leq (1+|x|)^{-a}e^{(\delta-b)|x|}  \int_{\R^N} J(x-y) e^{-\delta |y|} dy.
\end{aligned}
$$
Using the hypothesis \eqref{E} for $b=\delta$ we find
\begin{equation}\label{kernj3}
\begin{aligned}
A_2&  \leq (1+|x|)^{-a}e^{(\delta-b)|x|} e^{-\delta |x|} \\
&=C (1+|x|)^{-a}e^{-b |x|}.
\end{aligned}
\end{equation}
Using \eqref{kernj2} and \eqref{kernj3} into \eqref{kernj1} we deduce the upper bound in the estimate \eqref{kernimp}. For the lower bound we further estimate
$$
\begin{aligned}
\int_{\R^N} J(x-y) (1+|y|)^{-a}e^{-b|y|} dy & = \int_{\R^N} J(y) (1+|x-y|)^{-a}e^{-b |x-y|} dy \\
&\geq \int_{|y|<1} J(y)(1+|x-y|)^{-a}e^{-b|x-y|}dy\\
& \geq \int_{|y|<1} J(y) (1+|x|+|y|)^{-a}e^{-b(|x|+|y|)} dy \\
& \geq 2^{-a} (1+|x|)^{-a}e^{-b |x|} \int_{|y|<1} J(y)e^{-b|y|}  dy\\
&= C (1+|x|)^{-a}e^{-b |x|}.
\end{aligned}
$$
\end{proof}
For kernels $J$ which satisfy condition $(P)$ we have the following result. 
\begin{lemma}\label{l12}
\begin{enumerate}
\item[\rm (i)] The kernel $J(y)=(1+|y|)^{-\alpha}$, $\alpha>N$, satisfies condition $(P)$ but not condition $(E)$.
\item[\rm (ii)] Suppose the kernel $J$ satisfies condition $(P)$ and let $f\in L^1_{loc}(\R^N)$ be such that 
$$
f(x)\simeq (1+|x|)^{-a} \quad\mbox{ for some }0<a<N.
$$
Then, there exist two constants $C_0>c_0>0$ such that for all $x\in \R^N$ we have 
\begin{equation}\label{kernimp2}
c_0 (1+|x|)^{-a}\leq \intl_{|y|<1} J(y) f(x-y) dy\leq (J*f)(x)\leq C_0 (1+|x|)^{-a}.
\end{equation}
In particular,
\begin{equation}\label{kernj2a}
(J * f)(x) \simeq (1+|x|)^{-a}.
\end{equation}
\end{enumerate}
\end{lemma}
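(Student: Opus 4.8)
\textbf{Proof proposal for Lemma~\ref{l12}.}
The plan is to prove (i) and (ii) in turn, with part (ii) running parallel to Lemma~\ref{l1}(ii). For part (i), to verify that $J(y)=(1+|y|)^{-\alpha}$ with $\alpha>N$ satisfies $(P)$, I fix $b\in(0,N)$ and split $\int_{\R^N}J(x-y)|y|^{-b}\,dy$ according to the regions $\{|y|\le|x|/2\}$ and $\{|y|>|x|/2\}$. On the first region $|x-y|\ge|x|/2$, so, since $J$ is radially nonincreasing, $J(x-y)\le(1+|x|/2)^{-\alpha}$; combining this with $\int_{|y|\le|x|/2}|y|^{-b}\,dy=c_{N,b}|x|^{N-b}$ (finite because $b<N$) bounds this part by $c(1+|x|/2)^{-\alpha}|x|^{N-b}$, which is at most $C(1+|x|)^{-b}$ precisely because $\alpha>N$. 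On the second region $|y|^{-b}\le(|x|/2)^{-b}$ while $\int_{\R^N}J(x-y)\,dy=\|J\|_{L^1(\R^N)}<\infty$, so this part is at most $C|x|^{-b}$. For $|x|\le1$ one instead checks directly that $\int_{\R^N}J(x-y)|y|^{-b}\,dy$ is bounded by a constant independent of $x$ (the singularity of $|y|^{-b}$ at the origin is integrable since $b<N$, and for large $|y|$ the bound $|x-y|\ge|y|/2$ together with $\alpha+b>N$ gives integrability at infinity), while there $(1+|x|)^{-b}\ge2^{-b}$; hence $(P)$ holds in all cases.

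To see that $J$ does \emph{not} satisfy $(E)$, suppose it did with some $M_J>0$ and take $b=M_J/2\in(0,M_J)$. Restricting the integral to $\{|y|<1\}$ and using that $J$ is radially nonincreasing, a change of variables gives, for every $x\in\R^N$,
\[
\int_{\R^N}J(x-y)e^{-b|y|}\,dy\ \ge\ e^{-b}\int_{\{|x-z|<1\}}J(z)\,dz\ \ge\ e^{-b}\,c_N\,(2+|x|)^{-\alpha},
\]
where $c_N>0$ is the volume of the unit ball. A lower bound decaying only polynomially in $|x|$ cannot be dominated by $Ce^{-b|x|}$ as $|x|\to\infty$, a contradiction; thus $J$ satisfies $(P)$ but not $(E)$.

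For part (ii), fix $c_1,c_2>0$ with $c_1(1+|x|)^{-a}\le f(x)\le c_2(1+|x|)^{-a}$ on $\R^N$. The upper bound is then immediate from $(P)$: using $(1+|y|)^{-a}\le|y|^{-a}$ and applying $(P)$ with $b=a$ (allowed since $0<a<N$),
\[
(J*f)(x)\ \le\ c_2\int_{\R^N}J(x-y)(1+|y|)^{-a}\,dy\ \le\ c_2\int_{\R^N}J(x-y)|y|^{-a}\,dy\ \le\ C_0(1+|x|)^{-a}.
\]
For the lower bound, write $(J*f)(x)=\int_{\R^N}J(y)f(x-y)\,dy$ and discard all of $\{|y|\ge1\}$; for $|y|<1$ one has $f(x-y)\ge c_1(1+|x-y|)^{-a}\ge c_1(2+|x|)^{-a}\ge c_1 2^{-a}(1+|x|)^{-a}$, and $\kappa:=\int_{|y|<1}J(y)\,dy$ is finite and positive (finiteness from $J(y)\le c|y|^{\theta-N}$ with $\theta>0$, positivity from $J>0$), so
\[
\int_{|y|<1}J(y)f(x-y)\,dy\ \ge\ c_1 2^{-a}\kappa\,(1+|x|)^{-a}\ =:\ c_0(1+|x|)^{-a}.
\]
The middle inequality $\int_{|y|<1}J(y)f(x-y)\,dy\le(J*f)(x)$ holds because $J,f>0$, and \eqref{kernj2a} follows at once.

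The only step requiring real care is the verification of $(P)$ in part (i): the region split must be taken at the right scale and the regime $|x|\le1$ treated separately, since the estimate one needs is uniform in $x$. Part (ii) is then essentially a transcription of Lemma~\ref{l1}(ii) with the power weight in place of the exponential weight, the upper bound here being a direct consequence of $(P)$ rather than of a two-piece decomposition.
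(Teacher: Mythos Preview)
Your proof is correct, and in two places it is actually cleaner than the paper's. For part~(i), the paper verifies $(P)$ via a three-region split $\{|y|<|x|/2\}\cup\{|x|/2\le|y|<2|x|\}\cup\{|y|\ge2|x|\}$, estimating each piece separately; your two-region split $\{|y|\le|x|/2\}\cup\{|y|>|x|/2\}$ suffices because on the outer region the single inequality $|y|^{-b}\le(|x|/2)^{-b}$ together with $J\in L^1(\R^N)$ already does the job, making the third region unnecessary. The argument that $J$ fails $(E)$ is essentially identical in both. The more substantial simplification is in part~(ii): for the upper bound the paper splits $(J*f)(x)$ into $D_1=\int_{|y|<|x|}$ and $D_2=\int_{|y|\ge|x|}$, and on $D_1$ introduces an auxiliary exponent $b\in(a,N)$ in order to invoke $(P)$, whereas you observe that the pointwise bound $(1+|y|)^{-a}\le|y|^{-a}$ lets one apply $(P)$ directly with $b=a$ in a single line. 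Your route exploits the hypothesis $0<a<N$ more efficiently; the paper's decomposition is not wrong but is more laborious than necessary.
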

\begin{proof}
(i) Assume $J(y)=(1+|y|)^{-\alpha}$, $\alpha>N$, and let $0<b<N$. We decompose
$$
\intl_{\R^N}J(x-y)|y|^{-b}dy=\Big\{\intl_{|y|<\frac{|x|}{2} }+\intl_{\frac{|x|}{2}\leq |y|<2|x| } + \intl_{|y|\geq 2|x|}  \Big\} J(x-y)|y|^{-b}dy := B_1+B_2+B_3.
$$
To estimate $B_1$ we use 
$
|y|<\frac{|x|}{2}\Longrightarrow |x-y|\geq |x|-|y|\geq \frac{|x|}{2}$.
Thus,
\begin{equation}\label{b1}
\begin{aligned}
B_1&=\intl_{|y|<\frac{|x|}{2} } (1+|x-y|)^{-\alpha} |y|^{-b}dy\\
&\leq \Big(1+\frac{|x|}{2}\Big)^{-\alpha} \intl_{|y|<\frac{|x|}{2} }  |y|^{-b}dy\\
&\leq C(1+|x|)^{-\alpha}|x|^{N-b}\\
&\leq C|x|^{-b}\; , \quad \mbox{ since }\alpha>N.
\end{aligned}
\end{equation}

To estimate $B_2$ we use 
$|y|<2|x| \Longrightarrow |x-y|\leq 3|x|$ and then 
\begin{equation}\label{b2}
\begin{aligned}
B_2&\leq \intl_{|x-y|<3 |x|} (1+|x-y|)^{-\alpha} \Big(\frac{|x|}{2}\Big)^{-b}dy\\
&=C |x|^{-b} \intl_0^{3|x|} (1+t)^{-\alpha} t^{N-1}dt\\
&\leq C |x|^{-b}\, , \quad \mbox{ since $\alpha>N$}.
\end{aligned}
\end{equation}
Next, to estimate $B_3$ we use 
$|y|\geq 2|x| \Longrightarrow |x-y|\geq |y|-|x|\geq \frac{|y|}{2}$ and we have
\begin{equation}\label{b3}
\begin{aligned}
B_3&\leq \intl_{|y|\geq 2 |x|} \Big(1+\frac{|y|}{2}\Big)^{-\alpha} |y|^{-b } dy\\
&\leq (2|x|)^{-b}  \intl_{2|x| }^\infty  \Big(1+\frac{t}{2}\Big)^{-\alpha} t^{N-1}dt\\
&\leq C |x|^{-b}\, , \quad \mbox{ since $\alpha>N$}.
\end{aligned}
\end{equation}
Combining \eqref{b1}, \eqref{b2} and \eqref{b3} we deduce \eqref{P}.
However, $J$ cannot satisfy condition $(E)$ since for $|x|>1$ we have
$$
\begin{aligned}
\intl_{\R^N}J(x-y)e^{-b|y|}dy & \geq \intl_{|y|<1}(1+2|x|)^{-\alpha} e^{-b|y|}dy\\
&\geq (3|x|)^{-\alpha} \intl_{|y|<1}  e^{-b|y|}dy \geq c|x|^{-\alpha}.
\end{aligned}
$$
Thus, \eqref{E} cannot hold.

(ii) As in the proof of Lemma \ref{l1} we have
$$
\begin{aligned}
(J*f)(x)=\intl_{\R^N} J(y)f(x-y)d y & \geq C  \int_{|y|<1} J(y) (1+|x|+|y|)^{-a} dy \\
& \geq C \int_{|y|<1} J(y) (2+|x|)^{-a} dy \geq  C (1+|x|)^{-a}.
\end{aligned}
$$
If $|x|\leq 1$, then the upper bound in \eqref{kernimp2} holds since
$$
\begin{aligned}
(J*f)(x) & \leq C\intl_{\R^N} J(x-y)(1+|y|)^{-a} dy\\
& \leq C\intl_{\R^N}J(x-y)dy=C\intl_{\R^N}J(z) dz\\
&<C\leq 2^{a}C(1+|x|)^{-a}.
\end{aligned}
$$
To check that the upper bound in \eqref{kernimp2} holds for $|x|>1$, we decompose
\begin{equation}\label{d0}
\intl_{\R^N} J(x-y)f(y) dy=\Big\{ \intl_{|y|<|x|} +\intl_{|y|\geq |x|} \Big\} J(x-y)f(y) dy:=D_1+D_2.
\end{equation}
To estimate $D_1$ we let $0<a<b<N$ and since $|x|>1$ we have
$$
\begin{aligned}
D_1&\leq c \intl_{|y|<|x|} J(x-y)(1+|y|)^{-a} dy\\
&= c \intl_{|y|<|x|} J(x-y)(1+|y|)^{-b} (1+|y|)^{b-a} dy\\
&\leq c (1+|x|)^{b-a} \intl_{|y|<|x|} J(x-y) |y|^{-b} dy \quad \mbox{ since $b>a$}\\
&\leq c (1+|x|)^{b-a} |x|^{-b} \quad\mbox{ by property $(P)$}\\
&\leq c(1+|x|)^{-a} \, , \quad\mbox{ since } |x|>1.
\end{aligned}
$$
To estimate $D_2$ we also have
$$
\begin{aligned}
D_2&\leq c \intl_{|y|\geq |x|} J(x-y)(1+|y|)^{-a} dy\\
&= c (1+|x|)^{-a} \intl_{|y|\geq |x|} J(x-y)  dy\\
&\leq  c (1+|x|)^{-a} \intl_{\R^N} J(z)  dz \leq c(1+|x|)^{-a}.
\end{aligned}
$$
Using the last two estimates into \eqref{d0} we deduce the upper bound for $J*f$ in  \eqref{kernimp2}.
\end{proof}

The last result in this section provides an integral estimate obtained in \cite{GL25} and extends those in \cite{GKS20, GTbook}. This will be needed in the proof of Theorem \ref{thl}.
\begin{lemma}\label{lgl} {\rm (see \cite[Lemma 2.5]{GL25})}
Let $0<\theta<\kappa<N$. Then, there exist some constants $C>c>0$ such that for all $x\in \R^N$ we have 

\begin{equation}\label{st1}
c (1+|x|)^{\theta-\kappa}\leq \intl_{\R^N}\frac{dy}{|x-y|^{N-\theta}(1+|y|)^\kappa}\leq C (1+|x|)^{\theta-\kappa}.
\end{equation}
\end{lemma}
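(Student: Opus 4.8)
The plan is to prove \eqref{st1} by a dyadic decomposition of $\R^N$ according to the size of $|y|$ relative to $|x|$; throughout, $c$ and $C$ denote positive constants depending only on $N,\theta,\kappa$ whose values may change from line to line. Write $I(x):=\intl_{\R^N}|x-y|^{\theta-N}(1+|y|)^{-\kappa}\,dy$. First I would record that $I(x)$ is finite for every $x$: near $y=x$ the factor $|x-y|^{\theta-N}$ is locally integrable because $\theta>0$, and for $|y|$ large the integrand is at most a constant times $|y|^{\theta-N-\kappa}$, which is integrable at infinity because $\theta<\kappa$. I would then treat the regimes $|x|\le 1$ and $|x|>1$ separately, using $(1+|x|)^{\theta-\kappa}\simeq 1$ in the first and $(1+|x|)^{\theta-\kappa}\simeq|x|^{\theta-\kappa}$ in the second.

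For $|x|\le 1$ it suffices to bound $I(x)$ between two fixed positive constants. The lower bound follows by restricting the integral to $\{|x-y|<1\}$, where $1+|y|\le 3$, so $I(x)\ge 3^{-\kappa}\intl_{|z|<1}|z|^{\theta-N}\,dz>0$. For the upper bound I would split $\R^N=\{|x-y|<2\}\cup\{|x-y|\ge 2\}$: on the first piece $(1+|y|)^{-\kappa}\le 1$ and $\intl_{|z|<2}|z|^{\theta-N}\,dz<\infty$ since $\theta>0$; on the second piece $|y|\ge 1$ and $|x-y|\ge|y|/2$, so the integrand is $\le C|y|^{\theta-N-\kappa}$, which is integrable over $\{|y|\ge 1\}$ because $\kappa>\theta$.

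For $|x|>1$ I would prove the upper bound by decomposing $\R^N=\Omega_1\cup\Omega_2\cup\Omega_3$, where $\Omega_1=\{|y|<|x|/2\}$, $\Omega_2=\{|x|/2\le|y|<2|x|\}$, $\Omega_3=\{|y|\ge2|x|\}$, and estimating $I_j:=\intl_{\Omega_j}|x-y|^{\theta-N}(1+|y|)^{-\kappa}\,dy$. On $\Omega_1$ one has $|x-y|\ge|x|/2$, hence
\[
I_1\le C|x|^{\theta-N}\intl_{|y|<|x|/2}(1+|y|)^{-\kappa}\,dy\le C|x|^{\theta-N}|x|^{N-\kappa}=C|x|^{\theta-\kappa},
\]
where the second inequality uses $\kappa<N$ to get $\intl_0^{|x|/2}(1+r)^{-\kappa}r^{N-1}\,dr\le C|x|^{N-\kappa}$. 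On $\Omega_2$ one has $1+|y|\ge|x|/2$ and $|x-y|<3|x|$, hence
\[
I_2\le C|x|^{-\kappa}\intl_{|x-y|<3|x|}|x-y|^{\theta-N}\,dy\le C|x|^{-\kappa}\intl_0^{3|x|}r^{\theta-1}\,dr=C|x|^{\theta-\kappa},
\]
where the last step uses $\theta>0$. On $\Omega_3$ one has $|x-y|\ge|y|/2$, hence
\[
I_3\le C\intl_{|y|\ge2|x|}|y|^{\theta-N-\kappa}\,dy\le C\intl_{2|x|}^{\infty}r^{\theta-\kappa-1}\,dr=C|x|^{\theta-\kappa},
\]
using $\theta<\kappa$. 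Adding the three pieces gives $I(x)\le C|x|^{\theta-\kappa}$. For the matching lower bound I would retain only the contribution of $\{|x-y|<|x|/2\}$; on that set $|y|\le\tfrac32|x|$, so $1+|y|\le C|x|$, and therefore
\[
I(x)\ge c|x|^{-\kappa}\intl_{|x-y|<|x|/2}|x-y|^{\theta-N}\,dy=c|x|^{-\kappa}\intl_0^{|x|/2}r^{\theta-1}\,dr=c|x|^{\theta-\kappa}.
\]

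The computation is entirely elementary; the only point requiring care is that the three structural hypotheses are each used exactly once and in a different region — $\theta>0$ controls the local singularity and the region $\Omega_2$, $\kappa<N$ controls $\Omega_1$, and $\kappa>\theta$ controls $\Omega_3$ and the convergence at infinity — so the cut-offs must be arranged so that all three estimates simultaneously produce the single power $|x|^{\theta-\kappa}$; none of the hypotheses can be dropped.
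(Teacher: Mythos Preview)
Your proof is correct. The paper does not give its own proof of this lemma; it simply cites \cite[Lemma 2.5]{GL25}, so there is nothing in the paper to compare against directly. That said, the three-region decomposition you use (inner ball, annulus, outer region) is exactly the device the paper employs elsewhere, for instance in the proof of Lemma~\ref{l12}(i) for the kernel $(1+|y|)^{-\alpha}$, so your argument is fully in the spirit of the paper and would be the standard proof one expects of this estimate.
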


\subsection{The problem \eqref{www}}

This part is devoted to the study of the problem \eqref{www}. 
We first state the following comparison results, which are fairly standard. We include their proofs for the reader's convenience.
\begin{lemma}\label{comp0}
Let $\Omega\subset \R^N$ be an open and bounded set, $s\geq 0$, $\mu>0$, $K\in C(\overline\Omega)$ be a positive function and $w_1, w_2\in C^2(\overline\Omega)$ be such that 
$$
\begin{cases}
-\Delta w_1+\mu w_1-K(x)w_1^{-s}\geq 0  \geq -\Delta w_2+\mu w_2- K(x)w_2^{-s}\quad\mbox{ in }\Omega,\\[0.2cm]
\;\;\; w_1, w_2>0  \quad\mbox{ in }\Omega,\\[0.2cm]
\;\;\; \displaystyle  w_1\geq w_2 \quad\mbox{ on }\partial\Omega.
\end{cases}
$$
Then, $w_1\geq w_2$ in $\overline\Omega$.
\end{lemma}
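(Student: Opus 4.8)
The plan is to argue by contradiction and invoke the weak maximum principle for the operator $-\Delta+\mu$, which is available because $\mu>0$; the only structural input from the equation is that $t\mapsto t^{-s}$ is non-increasing on $(0,\infty)$ since $s\ge 0$, which is precisely what turns the singular nonlinearity into a sign‑favourable term.

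First I would set $\phi:=w_2-w_1\in C^2(\overline\Omega)$ and assume, for contradiction, that $\max_{\overline\Omega}\phi>0$. Since $\phi=w_2-w_1\le 0$ on $\partial\Omega$ by hypothesis and $\overline\Omega$ is compact, the maximum of $\phi$ must then be attained at some interior point $x_0\in\Omega$, where $\phi(x_0)>0$; in particular $w_2(x_0)>w_1(x_0)$, and $w_1(x_0)>0$ because $x_0\in\Omega$. This opening step is the one point that needs a bit of care: positivity of $w_1,w_2$ is only assumed on the open set $\Omega$, so one must know the extremum is interior before substituting into the differential inequalities.

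Then I would subtract the two inequalities to obtain, at every point of $\Omega$,
$$-\Delta\phi+\mu\phi\le K(x)\bigl(w_2^{-s}-w_1^{-s}\bigr).$$
Evaluating this at $x_0$: the right‑hand side is $\le 0$, since $K(x_0)>0$ and $0<w_1(x_0)<w_2(x_0)$ together with $s\ge 0$ force $w_2(x_0)^{-s}\le w_1(x_0)^{-s}$; while the left‑hand side is $\ge \mu\phi(x_0)>0$, because $\Delta\phi(x_0)\le 0$ at an interior maximum. This contradiction forces $\max_{\overline\Omega}\phi\le 0$, that is, $w_1\ge w_2$ on $\overline\Omega$.

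As for the difficulty, there is essentially no serious obstacle here — this is the textbook comparison argument. The two things to keep in mind are the one already mentioned (locating the maximum of $\phi$ at an interior point before using the equation) and the elementary monotonicity $0<a<b\Rightarrow b^{-s}\le a^{-s}$ for $s\ge 0$, which is the mechanism that makes the method work for the singular right‑hand side.
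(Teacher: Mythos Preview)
Your proof is correct and essentially identical to the paper's: both argue by contradiction, locate an interior extremum of $w_2-w_1$ (equivalently, of $w_1-w_2$), and derive a sign contradiction from $-\Delta\phi(x_0)\ge 0$, $\mu\phi(x_0)>0$, and the monotonicity of $t\mapsto t^{-s}$. Your write-up is in fact slightly more careful in explicitly noting why the extremum must be interior before invoking the differential inequalities.
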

\begin{proof}
Suppose by contradiction that the set $E=\{x\in \R^N:w_1(x)<w_2(x)\}$ is nonempty. Then $w_1-w_2$ has a minimum over $\overline\Omega$, say, at $x_0$, which is located in $E$. At this point, we have $\Delta (w_1-w_2)(x_0)\geq 0$. Using the conditions in the hypothesis we find 
$$
0> -\Delta (w_1-w_2)(x_0)+\mu (w_1-w_2) (x_0)\geq K(x_0)(w_1^{-s}(x_0)-w_2^{-s}(x_0))\geq 0,
$$
which is a contradiction. Hence, $E=\emptyset$ and thus $w_1\geq w_2$ in $\overline\Omega$. 
\end{proof}

\begin{lemma}\label{compp}
Let $s\geq 0$, $\mu>0$, $K\in C(\R^N)$ be a positive function and $w_1, w_2\in C^2(\R^N)$ be such that 
$$
\begin{cases}
-\Delta w_1+\mu w_1-K(x)w_1^{-s}\geq 0  \geq -\Delta w_2+\mu w_2- K(x)w_2^{-s}\quad\mbox{ in }\R^N,\\[0.2cm]
\;\;\; w_1, w_2>0  \quad\mbox{ in }\R^N,\\[0.2cm]
\;\;\; \displaystyle  \liminf_{|x|\to \infty} (w_1(x)-w_2(x)\big)\geq  0.
\end{cases}
$$
Then, $w_1\geq w_2$ in $\R^N$.
\end{lemma}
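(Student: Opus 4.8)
The plan is to reduce the statement on $\R^N$ to the bounded-domain comparison principle of Lemma \ref{comp0} by exhausting $\R^N$ with balls and controlling the behaviour at infinity. Suppose, for contradiction, that $w_1(x_0)<w_2(x_0)$ at some point $x_0\in\R^N$, and set $m_0:=\tfrac12\big(w_2(x_0)-w_1(x_0)\big)>0$. By the hypothesis $\liminf_{|x|\to\infty}(w_1(x)-w_2(x))\ge 0$, there exists $R_0>|x_0|$ such that $w_1(x)-w_2(x)\ge -m_0$ for all $|x|\ge R_0$. I would then try to run Lemma \ref{comp0} on $\Omega=B_R$ for large $R$; the obstruction is that on $\partial B_R$ we only know $w_1-w_2\ge -m_0$, not $w_1-w_2\ge 0$, so Lemma \ref{comp0} does not apply directly.

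To fix this, the natural device is a small additive perturbation. For $\varepsilon>0$ consider $\tilde w_1:=w_1+\varepsilon$. Since $t\mapsto t^{-s}$ is decreasing for $s\ge 0$ and $\tilde w_1\ge w_1>0$, we have $K(x)\tilde w_1^{-s}\le K(x)w_1^{-s}$, hence
\[
-\Delta \tilde w_1+\mu\tilde w_1-K(x)\tilde w_1^{-s}\ \ge\ -\Delta w_1+\mu w_1-K(x)w_1^{-s}+\mu\varepsilon\ \ge\ \mu\varepsilon\ >\ 0
\]
in $\R^N$, so $\tilde w_1$ is still a supersolution, in fact a strict one. Now I claim $\tilde w_1\ge w_2$ in $\R^N$ for every $\varepsilon>0$; letting $\varepsilon\downarrow 0$ then gives $w_1\ge w_2$, the desired conclusion. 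To prove the claim, suppose not; then $E_\varepsilon:=\{\tilde w_1<w_2\}$ is nonempty, and since $\liminf_{|x|\to\infty}(\tilde w_1-w_2)=\liminf_{|x|\to\infty}(w_1-w_2)+\varepsilon\ge\varepsilon>0$, the set $E_\varepsilon$ is bounded and its closure is contained in some ball $B_R$; moreover $\tilde w_1-w_2>0$ near $\partial B_R$ provided $R$ is large enough, so in particular $\tilde w_1\ge w_2$ on $\partial B_R$. Applying Lemma \ref{comp0} on $\Omega=B_R$ with $w_1$ replaced by $\tilde w_1$ yields $\tilde w_1\ge w_2$ in $\overline{B_R}$, contradicting the nonemptiness of $E_\varepsilon$. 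Alternatively, and more directly, since $E_\varepsilon$ is bounded, $\tilde w_1-w_2$ attains a negative minimum at some interior point $x_1\in E_\varepsilon$, where $\Delta(\tilde w_1-w_2)(x_1)\ge 0$ and $\tilde w_1(x_1)<w_2(x_1)$, so
\[
0\ >\ -\Delta(\tilde w_1-w_2)(x_1)+\mu\big(\tilde w_1(x_1)-w_2(x_1)\big)\ \ge\ K(x_1)\big(\tilde w_1^{-s}(x_1)-w_2^{-s}(x_1)\big)\ \ge\ 0,
\]
using $\tilde w_1(x_1)<w_2(x_1)\Rightarrow \tilde w_1^{-s}(x_1)\ge w_2^{-s}(x_1)$ — a contradiction.

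The only genuinely delicate point is extracting a bounded set on which the minimum of $\tilde w_1-w_2$ is actually attained: this is exactly where the strict positivity of the $\liminf$ (secured by the $+\varepsilon$ shift) is used, guaranteeing that any point where $\tilde w_1-w_2<0$ stays within a fixed ball, while continuity on the closed ball gives the attained minimum. Everything else is the same two-line maximum-principle argument as in Lemma \ref{comp0}. I would present the direct version (the second one above), since it avoids invoking Lemma \ref{comp0} a second time and keeps the proof self-contained.
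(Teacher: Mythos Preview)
Your proposal is correct and follows essentially the same approach as the paper: shift $w_1$ to $w_1+\varepsilon$, use the $\liminf$ hypothesis to obtain $w_1+\varepsilon\ge w_2$ outside a large ball, apply Lemma~\ref{comp0} on that ball, and then let $\varepsilon\downarrow 0$. The initial contradiction setup with $x_0$ and $m_0$ is unnecessary and can be dropped, but the core argument you give (in either of your two variants) matches the paper's proof.
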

\begin{proof} Let $\varepsilon>0$. Since $\liminf_{|x|\to \infty} (w_1(x)-w_2(x))\geq 0$, there exists $R>0$ such that 
$w_1+\ve\geq w_2$ in $\R^N\setminus B_R$.
On the other hand, $w_1+\ve$ and $w_2$ satisfy the conditions of Lemma \ref{comp0} in $\Omega=B_R$. Applying this result it follows that $w_1+\ve \geq w_2$ in $B_R$. Hence, $w_1+\ve \geq w_2$ in $\R^N$. Since $\ve>0$ was arbitrarily taken, this entails $w_1\geq w_2$ in $\R^N$.
\end{proof}

Let $\phi(x)=(1+|x|^2)^{1/2}$, $x\in \R^N$, and for all $a, b\geq 0$ denote 
\begin{equation}\label{fab}
\Psi_{a,b}(x)=\phi(x)^{-a}e^{-b\phi(x)}\quad \mbox{ for all }x\in \R^N.
\end{equation} 

\begin{lemma}\label{calcule}
Let $a,b\geq 0$ and 
\begin{equation}\label{lam1}
\lambda>\max\{2a+2(a+b)^2, N(a+b)\}.
\end{equation}

Then,
\begin{equation}\label{cf}
\frac{\lambda}{2} \Psi_{a,b}\leq -\Delta \Psi_{a,b}+\lambda \Psi_{a,b}\leq 2\lambda \Psi_{a,b} \quad\mbox{ in }\R^N.
\end{equation}
\end{lemma}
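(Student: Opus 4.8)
The strategy is a direct computation of $\Delta \Psi_{a,b}$ followed by elementary estimates, exploiting the largeness of $\lambda$ imposed by \eqref{lam1}. Writing $\phi(x)=(1+|x|^2)^{1/2}$, I would first record the radial derivatives: $\nabla\phi = x/\phi$, so $|\nabla\phi|^2 = |x|^2/\phi^2 = 1 - \phi^{-2}\le 1$, and $\Delta\phi = (N-|\nabla\phi|^2)/\phi = (N-1)/\phi + \phi^{-3} \le N/\phi$. Then with $\Psi_{a,b}=\phi^{-a}e^{-b\phi}=e^{-a\log\phi - b\phi}$ I would compute $\nabla\Psi_{a,b} = -(a\phi^{-1}+b)\,\nabla\phi\;\Psi_{a,b}$ and
\[
\Delta\Psi_{a,b} = \Big[(a\phi^{-1}+b)^2|\nabla\phi|^2 - (a\phi^{-1}+b)\Delta\phi + a\phi^{-2}|\nabla\phi|^2\Big]\Psi_{a,b}.
\]
So $-\Delta\Psi_{a,b}+\lambda\Psi_{a,b} = g(x)\,\Psi_{a,b}$ where $g(x) = \lambda - (a\phi^{-1}+b)^2|\nabla\phi|^2 + (a\phi^{-1}+b)\Delta\phi - a\phi^{-2}|\nabla\phi|^2$. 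The whole lemma reduces to showing $\lambda/2 \le g(x)\le 2\lambda$ for all $x$.

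For the lower bound, since $\phi\ge 1$ and $|\nabla\phi|\le 1$, the negative contributions are controlled by $(a\phi^{-1}+b)^2|\nabla\phi|^2 + a\phi^{-2}|\nabla\phi|^2 \le (a+b)^2 + a$, while the term $(a\phi^{-1}+b)\Delta\phi\ge 0$ can simply be dropped. Hence $g(x)\ge \lambda - (a+b)^2 - a \ge \lambda/2$ precisely because \eqref{lam1} gives $\lambda > 2a + 2(a+b)^2 \ge 2\big((a+b)^2 + a\big)$, i.e. $(a+b)^2+a < \lambda/2$. For the upper bound, I would bound each positive term: $(a\phi^{-1}+b)\Delta\phi \le (a+b)\cdot (N/\phi) \le N(a+b)$ using $\Delta\phi\le N/\phi\le N$, and drop the two manifestly negative terms; thus $g(x)\le \lambda + N(a+b) \le 2\lambda$, again by \eqref{lam1} which forces $N(a+b)<\lambda$. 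Combining, $\tfrac{\lambda}{2}\Psi_{a,b}\le -\Delta\Psi_{a,b}+\lambda\Psi_{a,b}\le 2\lambda\Psi_{a,b}$, which is \eqref{cf}.

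The only mildly delicate point is keeping track of signs in the Laplacian computation — in particular that the cross term coming from differentiating $e^{-b\phi}$ twice is $+(a\phi^{-1}+b)^2|\nabla\phi|^2$ (positive, hence harmful in the lower bound) while the term from $\Delta\phi$ carries the opposite sign from the exponential and is $+(a\phi^{-1}+b)\Delta\phi\ge 0$ (harmful in the upper bound). Once the expression for $g$ is written down correctly, both inequalities are immediate from $\phi\ge1$, $|\nabla\phi|\le 1$, $0\le\Delta\phi\le N$, and the two explicit thresholds in \eqref{lam1}; no subtler estimate is needed. I would also remark that the bounds are uniform in $x$, which is what is used later when $\Psi_{a,b}$ serves as a super/subsolution for \eqref{www} via Lemmas \ref{comp0}--\ref{compp}.
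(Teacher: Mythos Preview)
Your proof is correct and follows essentially the same route as the paper: a direct computation of $-\Delta\Psi_{a,b}+\lambda\Psi_{a,b}$ followed by elementary bounds using $\phi\ge 1$, arriving at exactly the same thresholds $\lambda+N(a+b)$ for the upper estimate and $\lambda-(a+b)^2-a$ for the lower one. The only cosmetic difference is that the paper fully expands the Laplacian as a polynomial in $\phi^{-1}$ (their display \eqref{fabb}) and then regroups, whereas you keep the expression compactly in terms of $|\nabla\phi|^2$ and $\Delta\phi$; your version is arguably cleaner but the content is identical.
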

\begin{proof} By direct calculations we have 
\begin{equation}\label{fabb}
\begin{aligned}
-&\Delta   \Psi_{a,b}+\lambda \Psi_{a,b}\\
&=\Big\{\lambda-b^2+\frac{b(N-2a-1)}{\phi}+\frac{b^2+a(N-a-2)}{\phi^2}+\frac{b(2a+1)}{\phi^3}+\frac{a(a+2)}{\phi^4}  \Big\}\Psi_{a,b}.
\end{aligned}
\end{equation}
Since $\phi\geq 1$ we have
$$
\begin{aligned}
-\Delta   \Psi_{a,b}+\lambda \Psi_{a,b} & \leq \Big\{\lambda-b^2+\frac{b(N-2a-1)+b(2a+1)}{\phi}+\frac{b^2+a(N-a-2)+a(a+2)}{\phi^2}  \Big\}\Psi_{a,b}\\
& =\Big\{\lambda-b^2+\frac{bN}{\phi}+\frac{b^2+aN}{\phi^2}  \Big\}\Psi_{a,b}\\
& \leq \Big\{\lambda-b^2+bN+b^2+aN \Big\}\Psi_{a,b}\\
& = \Big\{\lambda+N(a+b) \Big\}\Psi_{a,b}\\
& \leq 2\lambda\Psi_{a,b} \quad \mbox{ in }\R^N.
\end{aligned}
$$
This concludes the proof of the upper bound in \eqref{cf}. For the lower bound we  note that the quantity in \eqref{fabb} is increasing with respect to $N\geq 1$. Thus:
$$
\begin{aligned}
-\Delta   \Psi_{a,b}+\lambda \Psi_{a,b}& \geq \Big\{\lambda-b^2-\frac{2ab}{\phi}+\frac{b^2-a-a^2}{\phi^2}+\frac{b(2a+1)}{\phi^3}+\frac{a(a+2)}{\phi^4}  \Big\}\Psi_{a,b}\\
&\geq \Big\{\lambda-b^2-\frac{2ab}{\phi}-\frac{a+a^2}{\phi^2}\Big\}\Psi_{a,b}\\
&\geq \Big\{\lambda-b^2-2ab-a-a^2\Big\}\Psi_{a,b}\\
&= \Big\{\lambda-(a+b)^2-a \Big\}\Psi_{a,b}\\
&\geq \frac{\lambda}{2} \Psi_{a,b} \quad \mbox{ in }\R^N.
\end{aligned}
$$
\end{proof}

\begin{lemma}\label{exi}
Let $s\geq 0$, $\mu>0$ and $K\in C^{0, \gamma}_{loc}(\R^N)$ be a positive function such that $K(x)\simeq (1+|x|)^{-a}e^{-b|x|}$ for some $a,b\geq 0$. 
Then, for all
\begin{equation}\label{mmu}
\mu>\max\left\{\frac{2a}{1+s}+2\Big(\frac{a+b}{1+s}\Big)^2, N \Big(\frac{a+b}{1+s}\Big) \right\},
\end{equation}
the problem \eqref{www}
has a unique solution $w\in C^2(\R^N)$. Moreover, 
\begin{equation}\label{asm}
w(x)\simeq (1+|x|)^{-\frac{a}{1+s}} e^{-\frac{b}{1+s}|x|}.
\end{equation} 
\end{lemma}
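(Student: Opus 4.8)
The plan is to combine the comparison principle (Lemma \ref{compp}) with the explicit super/sub-solutions furnished by Lemma \ref{calcule}, and then pass from bounded domains to $\R^N$ via an exhaustion argument. First I would observe that the natural guess for the asymptotic profile of $w$ is $\Psi_{\alpha,\beta}$ with $\alpha=\frac{a}{1+s}$ and $\beta=\frac{b}{1+s}$; this is exactly the power/exponential rate that makes $\Psi_{\alpha,\beta}^{-s}$ match the inhomogeneity $K(x)\simeq(1+|x|)^{-a}e^{-b|x|}$ up to a multiplicative constant (recall $\phi(x)\simeq 1+|x|$ and $e^{-\beta\phi(x)}\simeq e^{-\beta|x|}$). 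Condition \eqref{mmu} is precisely \eqref{lam1} applied with the pair $(\alpha,\beta)$ in place of $(a,b)$, so Lemma \ref{calcule} yields $\frac{\mu}{2}\Psi_{\alpha,\beta}\leq -\Delta\Psi_{\alpha,\beta}+\mu\Psi_{\alpha,\beta}\leq 2\mu\Psi_{\alpha,\beta}$ in $\R^N$. Using $K(x)\simeq\phi(x)^{-a}e^{-b\phi(x)}=\Psi_{\alpha,\beta}(x)^{1+s}$, one checks that $\overline w:=M\Psi_{\alpha,\beta}$ is a supersolution and $\underline w:=\varepsilon\Psi_{\alpha,\beta}$ a subsolution of \eqref{www} for $M$ large and $\varepsilon$ small: indeed $-\Delta\overline w+\mu\overline w\geq \frac{\mu}{2}M\Psi_{\alpha,\beta}$ while $K\overline w^{-s}\leq C M^{-s}\Psi_{\alpha,\beta}$, so the supersolution inequality holds once $\frac{\mu}{2}M^{1+s}\geq C$, and symmetrically for $\underline w$; and $\underline w\leq\overline w$ everywhere after shrinking $\varepsilon$.

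Next I would construct the solution on $\R^N$ by a standard monotone iteration / exhaustion procedure. On each ball $B_R$, solve the Dirichlet problem $-\Delta w_R+\mu w_R=K(x)w_R^{-s}$ in $B_R$, $w_R=\underline w$ on $\partial B_R$, obtaining a solution with $\underline w\leq w_R\leq\overline w$ in $B_R$ by Lemma \ref{comp0} (the existence of $w_R$ between the ordered pair $\underline w,\overline w$ follows from the classical sub/supersolution method for the singular-at-zero-but-here-bounded-below nonlinearity $t\mapsto K(x)t^{-s}$, which is decreasing in $t$). Monotonicity in $R$ (again by Lemma \ref{comp0}, comparing $w_R$ and $w_{R'}$ on the smaller ball) gives a pointwise limit $w:=\lim_{R\to\infty}w_R$ trapped between $\underline w$ and $\overline w$; interior elliptic estimates ($C^{0,\gamma}_{loc}$ data $\Rightarrow C^{2,\gamma}_{loc}$ solutions, uniformly on compact sets because $w_R$ is bounded above and below away from $0$) upgrade this to a classical solution $w\in C^2(\R^N)$ of \eqref{www}, and the squeeze $\varepsilon\Psi_{\alpha,\beta}\leq w\leq M\Psi_{\alpha,\beta}$ is exactly the two-sided bound \eqref{asm} since $\Psi_{\alpha,\beta}(x)\simeq(1+|x|)^{-\alpha}e^{-\beta|x|}$. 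In particular $w(x)\to 0$ as $|x|\to\infty$, so $w$ solves \eqref{www}.

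Finally, uniqueness follows directly from Lemma \ref{compp}: if $w_1,w_2$ are two solutions, then both tend to $0$ at infinity, so $\liminf_{|x|\to\infty}(w_1-w_2)\geq 0$ and $\liminf_{|x|\to\infty}(w_2-w_1)\geq 0$, and since each is simultaneously a super- and a subsolution, Lemma \ref{compp} gives $w_1\geq w_2$ and $w_2\geq w_1$, hence $w_1=w_2$. The main technical obstacle is the singular nonlinearity $w^{-s}$: one must ensure all iterates stay uniformly bounded below by a positive function (which $\underline w=\varepsilon\Psi_{\alpha,\beta}$ does provide on any fixed compact set, though $\underline w\to 0$ at infinity) so that $K(x)w^{-s}$ remains locally Hölder and the elliptic regularity bootstrap is legitimate; away from infinity this is fine, and near infinity the two-sided bound by multiples of the same $\Psi_{\alpha,\beta}$ keeps the ratio of the nonlinear term to $\Psi_{\alpha,\beta}$ controlled, which is all that is needed to close the argument.
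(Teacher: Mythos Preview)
Your proposal is correct and follows essentially the same approach as the paper: identify the profile $\Psi_{\alpha,\beta}$ with $\alpha=\frac{a}{1+s}$, $\beta=\frac{b}{1+s}$, use Lemma~\ref{calcule} to build an ordered sub/supersolution pair $\varepsilon\Psi_{\alpha,\beta}\leq M\Psi_{\alpha,\beta}$, solve on balls, extract a limit via comparison and elliptic estimates, and conclude uniqueness from Lemma~\ref{compp}. The only cosmetic difference is that the paper imposes the boundary condition $\overline w$ on $\partial B_n$ (yielding a decreasing sequence $w_n$) whereas you take $\underline w$ on $\partial B_R$ (yielding an increasing sequence); both choices work equally well.
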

\begin{proof}
By our assumption on $K$, there exist constants $c_2>c_1>0$ such that
$$
c_1\Psi_{a,b}\leq K\leq c_2 \Psi_{a,b}\quad\mbox{ in }\R^N.
$$
Define
$$
D=\left( \frac{2c_2}{\mu} \right)^{\frac{1}{1+s}}\quad\mbox{ and }\quad 
d=\left( \frac{c_1}{2\mu} \right)^{\frac{1}{1+s}}
$$
and set 
$$
\overline w=D\Psi_{\frac{a}{1+s},\frac{b}{1+s}}\, , \quad \underline w=d\Psi_{\frac{a}{1+s},\frac{b}{1+s}}.
$$
By the results in Lemma \ref{calcule} we have
$$
-\Delta \overline w+\mu \overline w\geq \frac{\mu}{2} \overline w= \frac{\mu}{2} D \Psi_{\frac{a}{1+s},\frac{b}{1+s}}=c_2 \Psi_{a,b} \Big( D \Psi_{\frac{a}{1+s},\frac{b}{1+s}}\Big)^{-s}\geq K(x) \overline w^{-s}\quad\mbox{ in }\R^N.
$$
This shows that $\overline w$ is a super-solution of \eqref{www}. Similarly, 
$$
-\Delta \underline w+\mu \underline w\leq 2\mu \underline w=2\mu d \Psi_{\frac{a}{1+s},\frac{b}{1+s}}=c_1 \Psi_{a,b} \Big( d \Psi_{\frac{a}{1+s},\frac{b}{1+s}}\Big)^{-s}\leq K(x) \underline w^{-s}\quad\mbox{ in }\R^N.
$$
Hence, $\underline w$ is a sub-solution of \eqref{www}.
For all $n\geq 1$ consider the problem
\begin{equation}\label{wwn}
\begin{cases}
-\Delta w+\mu w=K(x) w^{-s}, w>0 &\quad\mbox{ in }B_n,\\
\;\;\; w(x)=\overline w& \quad\mbox{ on }\partial B_n.
\end{cases}
\end{equation}
Then $\underline w$ and $\overline w$ are respectively sub and supersolutions to \eqref{wwn}. From the standard sub and super-solution method (see \cite[Section 3]{Pao92}), it follows that  \eqref{wwn} has a solution $w_n\in C^2(\overline B_n)$ and by the comparison principle in Lemma \ref{compp} we have $\underline w\leq w_{n+1}\leq w_n\leq \overline w$ in $B_n$. We may thus define $w:=\lim_{n\to \infty} w_n$ and have $\underline w\leq w \leq \overline w$ in $\R^N$. By standard elliptic arguments we deduce that $w\in C^2(\R^N)$ is a solution to \eqref{www}. The asymptotic behaviour \eqref{asm} follows from $\underline w\leq w \leq \overline w$ in $\R^N$. Finally, the uniqueness of $w$ as a solution to \eqref{www} is obtained by applying Lemma \ref{compp}. 
\end{proof}
\section{Proof of Theorem \ref{ths}}
From \eqref{ro} there exist $\beta>\alpha>0$ such that 
\begin{equation}\label{ro1}
\alpha \Psi_{a,b}(x)\leq \rho(x)\leq \beta \Psi_{a,b}(x)\quad\mbox{ for all }x\in \R^N,
\end{equation}
where $\Psi_{a,b}(x)$ is defined in \eqref{fab}. By \eqref{mj} and \eqref{kernimp} there exist $C_1>c_1>0$ and $C_2>c_2>0$ such that
\begin{equation}\label{mj1}
c_1\Psi_{ap,bp}(x) \leq J*\Psi_{ap,bp}(x)\leq C_1 \Psi_{ap,bp}(x) \quad\mbox{ for all }x\in \R^N,
\end{equation}
and
\begin{equation}\label{mj2}
c_2\Psi_{am,bm}(x) \leq J*\Psi_{am,bm}(x)\leq C_2 \Psi_{am,bm}(x) \quad\mbox{ for all }x\in \R^N.
\end{equation}
For $a\geq 0$, $b>0$ given by \eqref{ro}, we define
$$
A=\frac{am}{1+s}\quad\mbox{ and }\quad B=\frac{bm}{1+s}.
$$
For $D_1>d_1>0$ and $D_2>d_2>0$ which will be precised later we introduce
\begin{equation}\label{eqset}
\mathscr{X}=\left\{(u,v)\in C^{0,\gamma}_{loc}(\R^N)\times C^{0,\gamma}_{loc}(\R^N): 
\begin{aligned}
d_1 \Psi_{a,b} & \leq u\leq  D_1 \Psi_{a,b}\\
d_2 \Psi_{A,B} & \leq v\leq  D_2 \Psi_{A,B}
\end{aligned} \quad\mbox{ in }\R^N \right\}.
\end{equation}
Because of \eqref{lm}, we can assume that
$\lambda>0$ and $\mu>0$ satisfy \eqref{lam1} and \eqref{mmu}, respectively. 
For $(u,v)\in \mathscr{X}$, let $(Tu, Tv)\in C^{2, \gamma}_{loc}(\R^N)\times C^{2, \gamma}_{loc}(\R^N)$ be the unique solution of the de-coupled system
\begin{equation}\label{eqw}
\begin{cases}
\displaystyle   -\Delta Tu+\lambda Tu=\frac{J * u^p}{v^q}+\rho(x) &\mbox{ in }\R^N\, ,\\[0.1in]
\displaystyle   -\Delta Tv+\mu Tv=\frac{J * u^m}{(Tv)^s} &\mbox{ in }\R^N,\\[0.1in]
Tu, Tv\to 0\quad \mbox{ as } |x|\to \infty.&
\end{cases}
\end{equation}
We now set
\begin{equation}\label{fx}
\mathscr{F}:\mathscr{X}\to C^{2, \gamma}_{loc}(\R^N)\times C^{2, \gamma}_{loc}(\R^N),\quad \mathscr{F}(u,v)=(Tu, Tv).
\end{equation}
We claim first that $Tu$ and $Tv$ are well defined. Let us note that thanks to 
\eqref{ro} and \eqref{mj1} we have
\begin{equation}\label{wa0s}
\frac{J * u^p}{v^q}+\rho(x)\simeq \frac{\Psi_{ap, bp}}{\Psi_{Aq,Bq}}+\Psi_{a,b}\simeq \Psi_{ap-Aq, bp-Bq}+\Psi_{a,b}.
\end{equation}
Next we observe that since $0<\sigma\leq 1< p$ we have 
\begin{equation}\label{wa1s}
ap-Aq=ap-a\frac{mq}{s+1}=ap-a(p-1)\sigma\geq ap-a(p-1)=a
\end{equation}
and also
\begin{equation}\label{wa2s}
bp-Bq=bp-b\frac{mq}{s+1}=bp-b(p-1)\sigma\geq bp-b(p-1)=b.
\end{equation} 
Thus, due to \eqref{wa0s}-\eqref{wa2s} we obtain that
$$
\frac{J * u^p}{v^q}+\rho(x)\simeq \Psi_{a,b}(x).
$$
The existence and uniqueness of $Tu\in C^{2}(\R^N)$ follows from 
Lemma \ref{exi} with 
$$
\mu=\lambda, \quad K(x)=\frac{J * u^p}{v^q}+\rho(x)\simeq \Psi_{a,b}(x)\quad\mbox{  and }\quad s=0.
$$ 
Note that by the definition of $(u, v)\in \mathscr{X}$ we have $u^p\in L^r(\R^N)$ for all $r\geq 1$ so by Lemma \ref{l2} one has $K\in C^{0, \gamma}_{loc}(\R^N)$. Thus, by the standard Schauder estimates, one derives $Tu\in C^{2, \gamma}_{loc}(\R^N)$. 

The existence and uniqueness of $Tv\in C^{2}(\R^N)$ follow from
Lemma \ref{exi} with 
$$
K=J * u^m\simeq \Psi_{am,bm}.
$$ 
By Schauder estimates, $Tv\in C^{2,\gamma}_{loc}(\R^N)$. 
We should point out that by Lemma \ref{l2} one has $K=J * u^m\in C^{0, \nu}(\R^N)$ so the conditions in Lemma \ref{exi} are fulfilled.

\begin{lemma}\label{lconst1}
Let $C_1>c_1>0$ and $C_2>c_2>0$ be the constants appear in  \eqref{mj1} and \eqref{mj2}. 
Assume that 
\begin{subequations}
\begin{align}
& d_1  =\frac{\alpha}{2\lambda} \, , \label{d1}\\
& \frac{\lambda}{4} d_2^{q} = C_1 D_1^{p-1}  \quad\mbox{ and }\quad \frac{\lambda}{4} D_1\geq \beta \, , \label{d2} \\[0.07in]
& \frac{\mu}{2} D_2^{s+1}=C_2 D_1^m\, , \label{d3}\\[0.04in]
& 2\mu d_2^{s+1}  = c_2 d_1^m. \label{d4}
\end{align}
\end{subequations}
Then, with $\mathscr{F}$ defined in \eqref{fx}, we have  $\mathscr{F}(\mathscr{X})\subset \mathscr{X}$.
\end{lemma}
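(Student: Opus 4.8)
The plan is to verify the two pairs of inequalities defining $\mathscr{X}$ separately, i.e.\ to show that $d_1\Psi_{a,b}\le Tu\le D_1\Psi_{a,b}$ and $d_2\Psi_{A,B}\le Tv\le D_2\Psi_{A,B}$ in $\R^N$ whenever $(u,v)\in\mathscr{X}$, using the comparison principle (Lemma \ref{compp}) to compare $Tu$ and $Tv$ against the explicit barriers $\Psi_{a,b}$ and $\Psi_{A,B}$, whose behaviour under $-\Delta+(\text{const})$ is controlled by Lemma \ref{calcule}. The conditions \eqref{d1}--\eqref{d4} are precisely what is needed to make each barrier a sub- or super-solution of the corresponding equation in \eqref{eqw}.

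First I would treat $Tv$. Since $(u,v)\in\mathscr{X}$ gives $d_1\Psi_{a,b}\le u\le D_1\Psi_{a,b}$, raising to the $m$-th power and applying \eqref{mj2} yields $c_2 d_1^m\,\Psi_{am,bm}\le J*u^m\le C_2 D_1^m\,\Psi_{am,bm}$. Recall $A=\frac{am}{1+s}$, $B=\frac{bm}{1+s}$, so $\Psi_{A,B}^{s+1}=\Psi_{am,bm}$. Take $\overline w=D_2\Psi_{A,B}$. By Lemma \ref{calcule} (applicable because \eqref{mmu} holds, hence \eqref{lam1} holds with $a,b$ replaced by $A,B$), $-\Delta\overline w+\mu\overline w\ge \tfrac{\mu}{2}D_2\Psi_{A,B}$, and using \eqref{d3} together with the upper bound on $J*u^m$ one checks $\tfrac{\mu}{2}D_2\Psi_{A,B}=C_2 D_1^m\Psi_{A,B}^{-s}\ge (J*u^m)\,\overline w^{-s}$, so $\overline w$ is a super-solution of the $Tv$-equation. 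Similarly $\underline w=d_2\Psi_{A,B}$ satisfies $-\Delta\underline w+\mu\underline w\le 2\mu d_2\Psi_{A,B}$, and \eqref{d4} with the lower bound on $J*u^m$ gives $2\mu d_2\Psi_{A,B}=c_2 d_1^m\Psi_{A,B}^{-s}\le (J*u^m)\,\underline w^{-s}$, so $\underline w$ is a sub-solution. Both barriers vanish at infinity, so Lemma \ref{compp} gives $\underline w\le Tv\le\overline w$, i.e.\ the $v$-component of $\mathscr{F}(u,v)$ lies in the required range.

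Next I would treat $Tu$, which solves a \emph{linear} equation $-\Delta Tu+\lambda Tu=K(x)$ with $K=\frac{J*u^p}{v^q}+\rho$. Using $(u,v)\in\mathscr{X}$, \eqref{mj1}, \eqref{ro1}, and the identities \eqref{wa1s}--\eqref{wa2s} (which give $ap-Aq\ge a$, $bp-Bq\ge b$, hence $\Psi_{ap-Aq,bp-Bq}\le\Psi_{a,b}$ since $\phi\ge1$), one bounds
\[
\frac{c_1 d_1^p}{D_2^q}\,\Psi_{ap-Aq,bp-Bq}+\alpha\Psi_{a,b}\ \le\ K(x)\ \le\ \frac{C_1 D_1^p}{d_2^q}\,\Psi_{ap-Aq,bp-Bq}+\beta\Psi_{a,b}\ \le\ \Big(\frac{C_1 D_1^p}{d_2^q}+\beta\Big)\Psi_{a,b}.
\]
For the super-solution, $\overline w=D_1\Psi_{a,b}$ gives $-\Delta\overline w+\lambda\overline w\ge\tfrac{\lambda}{2}D_1\Psi_{a,b}$ by Lemma \ref{calcule}, and one wants $\tfrac{\lambda}{2}D_1\ge \frac{C_1 D_1^p}{d_2^q}+\beta$; by \eqref{d2}, $\frac{C_1 D_1^p}{d_2^q}=\frac{\lambda}{4}D_1$ and $\beta\le\frac{\lambda}{4}D_1$, so the right-hand side is $\le\frac{\lambda}{2}D_1$, as needed. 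For the sub-solution, $\underline w=d_1\Psi_{a,b}$ gives $-\Delta\underline w+\lambda\underline w\le 2\lambda d_1\Psi_{a,b}$, and using only the $\alpha\Psi_{a,b}$ part of the lower bound on $K$ one needs $2\lambda d_1\le\alpha$, which holds with equality-slack by \eqref{d1} (indeed $2\lambda d_1=\alpha$). Again both barriers vanish at infinity, so Lemma \ref{compp} yields $d_1\Psi_{a,b}\le Tu\le D_1\Psi_{a,b}$. Combining the two parts, $\mathscr{F}(u,v)\in\mathscr{X}$, and since $Tu,Tv\in C^{2,\gamma}_{loc}(\R^N)\subset C^{0,\gamma}_{loc}(\R^N)$ the membership in $\mathscr{X}$ is complete.

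The only genuine subtlety — and the step I would be most careful about — is the \emph{consistency} of the system \eqref{d1}--\eqref{d4}: one must check that the four relations can actually be solved simultaneously for $d_1,D_1,d_2,D_2$ with $D_1>d_1>0$ and $D_2>d_2>0$, and that this is compatible with the scaling constraint \eqref{lm} relating $\lambda$ and $\mu$. Concretely, \eqref{d1} fixes $d_1$; the second half of \eqref{d2} forces $D_1\ge 4\beta/\lambda$, and combined with the first half of \eqref{d2} this determines $d_2$ in terms of $D_1$; then \eqref{d4} determines $\mu d_2^{s+1}$ in terms of $d_1^m$ and \eqref{d3} determines $\mu D_2^{s+1}$ in terms of $D_1^m$ — the ratio $(D_2/d_2)^{s+1}=\frac{4C_2}{c_2}(D_1/d_1)^m>1$ is automatic, while $D_1>d_1$ forces the lower bound $\lambda^{t}\gtrsim\mu$ (with $t=\frac{p(s+1)}{q}-m$) that appears in \eqref{lm}. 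I would present this book-keeping carefully, but it is elementary algebra once the barrier computations above are in place.
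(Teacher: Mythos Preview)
Your proposal is correct and follows essentially the same approach as the paper: build barriers $d_i\Psi$, $D_i\Psi$ for $Tu$ and $Tv$, use Lemma~\ref{calcule} to control $-\Delta+\lambda$ (resp.\ $-\Delta+\mu$) on them, and apply the comparison principle Lemma~\ref{compp}, with \eqref{d1}--\eqref{d4} supplying exactly the constants needed. Two small remarks: in your $Tv$ sub-solution step the identity should read $2\mu d_2\Psi_{A,B}=c_2 d_1^m\Psi_{am,bm}(d_2\Psi_{A,B})^{-s}$ rather than $c_2 d_1^m\Psi_{A,B}^{-s}$ (a missing $\Psi_{am,bm}$ and $d_2^{-s}$), and the consistency check in your final paragraph is in the paper handled separately as Lemma~\ref{lconst2}, not as part of the present proof.
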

We shall prove that the four constants $D_1>d_1>0$ and $D_2>d_2>0$ that fulfill \eqref{d1}-\eqref{d4} indeed exist in Lemma \ref{lconst2} below. 
\begin{proof}
Let $(u,v)\in\mathscr{X}$. From \eqref{ro1}, \eqref{d1} and \eqref{cf} we have
$$
-\Delta Tu+\lambda Tu\geq \rho(x)\geq \alpha \Psi_{a,b}\geq -\Delta (d_1\Psi_{a,b})+\lambda (d_1\Psi_{a,b}) \quad\mbox{ in }\R^N.
$$
We apply Lemma \ref{compp} for $w_1=Tu$, $w_2=d_1\Psi_{a,b}$, $K(x)=\rho(x)$ and $s=0$ to deduce $Tu \geq d_1 \Psi_{a,b}$ in $\R^N$.

From \eqref{d2} we have 
\begin{equation}\label{dd}
\frac{C_1 D_1^p}{d_2^q}+\beta\leq \frac{\lambda}{4} D_1+\frac{\lambda}{4} D_1=\frac{\lambda}{2} D_1.
\end{equation}
Using this fact together with \eqref{ro1}, \eqref{mj1}, \eqref{eqset}, \eqref{eqw}, \eqref{wa1s}-\eqref{wa2s}, \eqref{dd} and \eqref{cf} we have

$$
\begin{aligned}
-\Delta Tu+\lambda Tu &=\frac{J*u^p}{v^q}+\rho(x) & \quad \mbox{ by \eqref{eqw} }\\[0.04in]
&\leq \frac{J*(D_1\Psi_{a,b})^p}{v^q}+\beta \Psi_{a,b} & \quad \mbox{ by \eqref{eqset}, \eqref{ro1} }\\[0.05in]
& \leq \frac{C_2D_1^p}{d_2^q} 
\frac{\Psi_{a,b}^p}{\Psi_{A,B}^q}+\beta \Psi_{a,b} & \quad \mbox{ by \eqref{ro1}, \eqref{mj1}}\\[0.07in]
& = \frac{C_2D_1^p}{d_2^q}  
\Psi_{ap-Aq,bp-Bq}+\beta \Psi_{a,b} & \\[0.07in]
& \leq \Big(\frac{C_2D_1^p}{d_2^q}+\beta\Big) \Psi_{a,b} & \quad \mbox{ by \eqref{wa1s}, \eqref{wa2s}}\\[0.07in]
& \leq \frac{\lambda}{2} D_1 \Psi_{a,b} &\quad \mbox{ by \eqref{dd}}\\[0.07in]
&\leq -\Delta (D_1\Psi_{a,b})+\lambda (D_1\Psi_{a,b}) &\quad \mbox{ by \eqref{cf}}.
\end{aligned}
$$
By Lemma \ref{compp} with $w_1=D_1 \Psi_{a,b}$, $w_2=Tu$, $K(x)=\frac{\lambda}{2} D_1 \Psi_{a,b}$ and $s=0$, we deduce $Tu \leq D_1 \Psi_{a,b}$ in $\R^N$.
So far, we have established the double inequality $d_1 \Psi_{a,b}\leq Tu\leq D_1 \Psi_{a,b}$ in $\R^N$.

By $u\geq d_1 \Psi_{a,b}$ and \eqref{mj2} we have
$$
-\Delta Tv+\mu Tv =\frac{J*u^m}{(Tv)^s}\geq \frac{J*(d_1\Psi_{a,b})^m}{(Tv)^s}\geq c_2d_1^m 
\frac{\Psi_{am,bm}}{(Tv)^s} \quad\mbox{ in }\R^N.
$$
On the other hand, by \eqref{cf} and \eqref{d4} we have
$$
-\Delta (d_2\Psi_{A,B})+\mu (d_2\Psi_{A,B}) \leq 2\mu d_2 \Psi_{A,B}= c_2d_1^m 
\frac{\Psi_{am,bm}}{(d_2 \Psi_{A,B})^s} \quad\mbox{ in }\R^N.
$$
We now apply Lemma \ref{compp} for $w_1=Tv$, $w_2=d_2 \Psi_{A,B}$, $K(x)=c_2d_1^m \Psi_{am,bm}$ to deduce $Tv\geq d_2 \Psi_{A,B}$ in $\R^N$.

Finally, from $u\leq D_1 \Psi_{a,b}$ and \eqref{mj2} we have
$$
-\Delta Tv+\mu Tv =\frac{J*u^m}{(Tv)^s}\leq \frac{J*(D_1\Psi_{a,b})^m}{(Tv)^s}
\leq C_2D_1^m 
\frac{\Psi_{am,bm}}{(Tv)^s} \quad\mbox{ in }\R^N.
$$
As above, by \eqref{cf} and \eqref{d3} we derive
$$
-\Delta (D_2\Psi_{A,B})+\mu (D_2\Psi_{A,B}) \geq \frac{\mu}{2} D_2 \Psi_{A,B}= C_2 D_1^m 
\frac{\Psi_{am,bm}}{(D_2 \Psi_{A,B})^s} \quad\mbox{ in }\R^N.
$$
By Lemma \ref{compp} for $w_1=D_2 \Psi_{A,B}$, $w_2=Tv$,  $K(x)=C_2 D_1^m \Psi_{am,bm}$ we deduce $Tv\leq D_2 \Psi_{A,B}$ in $\R^N$.

\end{proof}

\begin{lemma}\label{lconst2}
Let $C_1>c_1>0$ and $C_2>c_2>0$ be the constants appear in \eqref{mj1} and \eqref{mj2}. 
Then, there exist $D_1>d_1>0$ and $D_2>d_2>0$ depending on $\alpha,c_1,c_2, C_1, C_2$ such that for all $\lambda, \mu$ satisfying \eqref{lm}, conditions \eqref{d1}-\eqref{d4} hold.
\end{lemma}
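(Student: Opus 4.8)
The four relations \eqref{d1}--\eqref{d4} form a \emph{triangular} system: \eqref{d1} fixes $d_1$, the equality in \eqref{d4} then determines $d_2$ in terms of $d_1$, the equality in \eqref{d2} determines $D_1$ in terms of $d_2$, and \eqref{d3} determines $D_2$ in terms of $D_1$. So the plan is, for the given $\lambda,\mu$ on the curve \eqref{lm}, to simply \emph{define}
\[
d_1:=\frac{\alpha}{2\lambda},\qquad
d_2:=\Big(\frac{c_2 d_1^{m}}{2\mu}\Big)^{\frac{1}{s+1}},\qquad
D_1:=\Big(\frac{\lambda d_2^{q}}{4C_1}\Big)^{\frac{1}{p-1}},\qquad
D_2:=\Big(\frac{2C_2 D_1^{m}}{\mu}\Big)^{\frac{1}{s+1}},
\]
all positive and, by construction, satisfying \eqref{d1}, the equality in \eqref{d2}, \eqref{d3} and \eqref{d4}. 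There then remain only three inequalities to verify: the condition $\tfrac{\lambda}{4}D_1\ge\beta$ from \eqref{d2}, and the two strict inequalities $D_1>d_1$ and $D_2>d_2$.

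The first two reduce to a single lower bound on $\lambda^{p}d_2^{q}$. Inserting $d_1$ gives $d_2^{q}=\big(\tfrac{c_2}{2\mu}\big)^{q/(s+1)}\big(\tfrac{\alpha}{2\lambda}\big)^{mq/(s+1)}$, so $\tfrac{\lambda}{4}D_1\ge\beta$ is equivalent to $\lambda^{p}d_2^{q}\ge 4C_1(4\beta)^{p-1}$, while $(D_1/d_1)^{p-1}=\tfrac{2^{p-1}}{4C_1\alpha^{p-1}}\lambda^{p}d_2^{q}$ shows $D_1>d_1$ is equivalent to $\lambda^{p}d_2^{q}>4C_1\alpha^{p-1}/2^{p-1}$. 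Here, and only here, the curve \eqref{lm} is used: its second part is $\mu\le\lambda^{t}/c$ with $t=\tfrac{p(s+1)}{q}-m>0$ (positivity of $t$ being recorded just after Theorem~\ref{ths}), so that $\mu^{-q/(s+1)}\ge c^{q/(s+1)}\lambda^{-tq/(s+1)}$ and hence
\[
\lambda^{p}d_2^{q}\ \ge\ \Big(\frac{c\,c_2}{2}\Big)^{\frac{q}{s+1}}\Big(\frac{\alpha}{2}\Big)^{\frac{mq}{s+1}}\,\lambda^{\,p-\frac{q(t+m)}{s+1}}\ =\ \Big(\frac{c\,c_2}{2}\Big)^{\frac{q}{s+1}}\Big(\frac{\alpha}{2}\Big)^{\frac{mq}{s+1}},
\]
the power of $\lambda$ vanishing because $\tfrac{q(t+m)}{s+1}=\tfrac{q}{s+1}\cdot\tfrac{p(s+1)}{q}=p$. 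The right-hand side is a constant depending only on $c,c_2,\alpha$ and the fixed exponents, and it grows to $+\infty$ as $c\to\infty$; choosing $c$ large enough that it exceeds $\max\{4C_1(4\beta)^{p-1},\ 4C_1\alpha^{p-1}/2^{p-1}\}$ makes both $\tfrac{\lambda}{4}D_1\ge\beta$ and $D_1>d_1$ hold for every $(\lambda,\mu)$ obeying \eqref{lm}.

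For the last inequality, dividing \eqref{d3} by \eqref{d4} yields $(D_2/d_2)^{s+1}=\tfrac{4C_2}{c_2}(D_1/d_1)^{m}>1$, since $D_1>d_1$ and $4C_2/c_2>4$; hence $D_2>d_2$. This exhausts the conditions, and the desired $D_1>d_1>0$, $D_2>d_2>0$ are produced, with the threshold imposed on $c$ in \eqref{lm} (together with the already assumed largeness of $C$) depending only on $\alpha,\beta,c_1,c_2,C_1,C_2$ and the exponents $N,m,p,q,s$.

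I do not expect a genuine obstacle here: the one point needing care is the exact cancellation of the power of $\lambda$ in the displayed estimate for $\lambda^{p}d_2^{q}$, which is precisely what forces the exponent $t=\tfrac{p(s+1)}{q}-m$ in \eqref{lm}; everything else is exponent bookkeeping, plus the trivial remark that taking $c$ large here is compatible with the separate requirement $\mu\ge C$ used elsewhere in the proof of Theorem~\ref{ths} via Lemmas~\ref{calcule} and \ref{exi}.
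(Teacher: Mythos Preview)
Your proof is correct and follows essentially the same approach as the paper's: both solve the triangular system \eqref{d1}--\eqref{d4} explicitly for $d_1,d_2,D_1,D_2$ in terms of $\lambda,\mu$ and then observe that the remaining inequalities $D_1>d_1$, $D_2>d_2$ and $\tfrac{\lambda}{4}D_1\ge\beta$ hold once the constants $C,c$ in \eqref{lm} are chosen large enough. You supply more detail than the paper (which simply asserts the final step), in particular the explicit cancellation of the $\lambda$-exponent and the clean derivation of $D_2>d_2$ by dividing \eqref{d3} by \eqref{d4}.
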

\begin{proof}
From \eqref{d1} and \eqref{d4} we find 
\begin{equation}\label{d5}
d_2=\left(\frac{c_2\alpha^m}{2^{m+1}} \right)^{\frac{1}{1+s}}\lambda^{-\frac{m}{1+s}}\mu^{-\frac{1}{1+s}}.
\end{equation}
From \eqref{d2}$_1$ we find $D_1^{p-1}=\frac{\lambda}{4C_1}d_2^q$ and by \eqref{d5} we obtain
\begin{equation}\label{d6}
D_1=C_3 \lambda^{\frac{1}{p-1}-\sigma }\mu^{-\frac{\sigma}{m}}\quad\mbox{ where }\quad C_3=\left[\frac{1}{4C_1} \left(\frac{c_2\alpha^m}{2^{m+1}} \right)^{\frac{q}{1+s}} \right]^{\frac{1}{p-1}}.
\end{equation}
Using \eqref{d6} and \eqref{d3} we find 
\begin{equation}\label{d7}
D_2=C_4 \lambda^{\frac{m}{1+s}\big(\frac{1}{p-1}-\sigma\big) }\mu^{-\frac{\sigma+1}{1+s}}\quad\mbox{ where }\quad C_4=(2C_2C_3^m)^{\frac{1}{1+s}}.
\end{equation}
Now, if $\lambda$ and $\mu$ satisfy \eqref{lm}, for large constants $C,c>0$ in \eqref{lm} one has $D_1>d_1$, $D_2>d_2$ and \eqref{d2}$_2$. This concludes the proof.
\end{proof}
By elliptic arguments one can show that $\mathscr{F}$ defined in \eqref{fx} is continuous. 
At this stage we cannot apply the Schauder fixed point theorem as the set $\mathscr{X}$ is not compact. To overcome this difficulty, we shall work on approximating balls that exhaust $\R^N$.

For any $n\geq 1$ let
\begin{equation}\label{eqsetn}
\mathscr{X}_n=\left\{(u,v)\in C^{0,\gamma}_{loc}(\overline B_n) \times C^{0,\gamma}_{loc}(\overline B_n): 
\begin{aligned}
d_1 \Psi_{a,b} & \leq u\leq  D_1 \Psi_{a,b}\\
d_2 \Psi_{A,B} & \leq v\leq  D_2 \Psi_{A,B}
\end{aligned} \quad\mbox{ in } B_n \right\}.
\end{equation}
If $(u,v)\in \mathscr{X}_n$ we fix $(\widetilde u, \widetilde v)\in \mathscr{X}$ an extension of $(u,v)$ to $C^{0,\gamma}_{loc}(\R^N)\times C^{0,\gamma}_{loc}(\R^N)$. 
For any $(u,v)\in \mathscr{X}_n$ let 
$$
(T_nu, T_nv)\in C^{2,\gamma}_{loc}(\R^N)\times C^{2,\gamma}_{loc}(\R^N),
$$ 
be the solution of 
\begin{equation}\label{eqwn}
\begin{cases}
\displaystyle   -\Delta T_nu+\lambda T_nu=\frac{J * \widetilde u^p}{\widetilde v^q}+\rho(x) &\mbox{ in }\R^N\, ,\\[0.1in]
\displaystyle   -\Delta T_nv+\mu T_nv=\frac{J * \widetilde u^m}{(T_nv)^s} &\mbox{ in }\R^N,\\[0.1in]
T_nu, T_nv\to 0\quad \mbox{ as } |x|\to \infty.&
\end{cases}
\end{equation}
We now set
\begin{equation}\label{fxn}
\mathscr{F}_n:\mathscr{X}_n\to C^{2,\gamma}_{loc}(\overline B_n)\times C^{2,\gamma}_{loc}(\overline B_n),\quad \mathscr{F}_n(u,v)=(T_nu\mid_{\overline B_n}, T_nv\mid_{\overline B_n}).
\end{equation}
For $(u,v)\in \mathscr{X}_n$ we have $(\tilde u, \tilde v)\in \mathscr{X}$. Thus,  by \eqref{mj1} it follows that 
$$
(J * \widetilde u^p)(x)\leq (J * (D_1\Psi_{a,b})^p)(x)\leq C_1D_1^p\Psi_{ap, bp}(x)\quad\mbox{ for all }x\in \R^N,
$$
and 
$$
(J * \widetilde u^p)(x)\geq (J * (d_1\Psi_{a,b})^p)(x)\geq c_1d_1^p\Psi_{ap, bp}(x)\quad\mbox{ for all }x\in \R^N.
$$
Similar estimates hold for $J*\widetilde u^m$. 
With the same argument as in Lemma \ref{lconst1} we have that $\mathscr{X}_n$ is invariant for $\mathscr{F}_n$. We may now apply the Schauder fixed point theorem to deduce the existence of a fixed point $(u_n, v_n)\in \mathscr{X}_n$ of $\mathscr{F}_n$. Thus, from \eqref{eqwn} and \eqref{fxn} we have
\begin{equation}\label{eqwnn}
\begin{cases}
\displaystyle   -\Delta u_n+\lambda u_n=\frac{J * \widetilde u_n^p}{v_n^q}+\rho(x) &\mbox{ in }B_n\, ,\\[0.1in]
\displaystyle   -\Delta v_n+\mu v_n=\frac{J * \widetilde u_n^m}{v_n^s} &\mbox{ in }B_n. 
\end{cases}
\end{equation}
For convenience, let
$$
f_n:=\frac{J * \widetilde u_n^p}{v_n^q}+\rho(x)\,,\quad\mbox{ and }\quad 
g_n:=\frac{J * \widetilde u^m}{v_n^s}.
$$
Since $(u_n, v_n)\in \mathscr{X}_n$ and $(\widetilde u_n, \widetilde v_n)\in \mathscr{X}$, by \eqref{ro}, \eqref{wa1s}-\eqref{wa2s} we have
$$
\begin{aligned}
f_n & \leq \frac{J * \widetilde u_n^p}{v_n^q}+\beta \Psi_{a,b}\leq C\Psi_{a,b}\\[0.05in]
g_n& \leq  C \Psi_{A,B}
\end{aligned}
\quad\mbox{ in }\overline B_n,
$$
where $C>0$ is independent of $n$. Thus, $\{f_n\}_{n\geq 1}$ and $\{g_n\}_{n\geq 1}$ are bounded in $L^r(B_1)$ for all $r>1$. By standard elliptic regularity arguments, it follows that $\{u_n\}_{n\geq 1}$, $\{v_n\}_{n\geq 1}$ are bounded in $W^{2,r}(B_1)$. Since for $r>N$ and any smooth and bounded domain $\Omega\subset \R^N$ the embedding $W^{2,r}(\Omega) \hookrightarrow C^{1,\gamma}(\overline\Omega)$ is compact, it follows that there exists a subsequence $\{(u_n^1, v_n^1)\}_{n\geq 1}$ which converges in $C^{1, \gamma}(\overline B_1)$. 

By the same argument, $\{(u_n^1, v_n^1)\}_{n\geq 2}$ is bounded in $W^{2,r}(B_2)$, $r>N$. We thus deduce that there exists a subsequence $\{(u_n^2, v_n^2)\}_{n\geq 2}$  of $\{(u_n^1, v_n^1)\}_{n\geq 2}$  which converges in $C^{1,\gamma}(\overline B_2)$ 
Inductively, for any $k\geq 2$ we construct a subsequence $\{(u_n^k, v_n^k)\}_{n\geq k}$ of $\{(u_n^{k-1}, v_n^{k-1})\}_{n\geq k}$ which converges in $C^{1,\gamma}(\overline B_k)$. 
Let now
$$
U_n:=u_n^n\quad\mbox{ and }\quad  V_n:=v_n^n\quad\mbox{ for all }n\geq 1.
$$
Then $\{(U_n, V_n)\}_{n\geq 1}$ converges in $C^{1,\gamma}_{loc}(\R^N)\times C^{1,\gamma}_{loc}(\R^N)$ to some $(U,V)$ and from \eqref{eqwnn} we have
\begin{equation}\label{eqws}
\begin{cases}
\displaystyle   -\Delta U_n+\lambda U_n=\frac{J * \tilde U_n^p}{V_n^q}+\rho(x) &\mbox{ in }B_n\, ,\\[0.1in]
\displaystyle   -\Delta V_n+\mu V_n=\frac{J * \tilde U_n^m}{V_n^s} &\mbox{ in }B_n. 
\end{cases}
\end{equation}
We claim that $(U, V)\in C^{1,\gamma}(\R^N)\times C^{1,\gamma}(\R^N)$ is a weak solution of \eqref{GMs}. 
Indeed, let $\varphi \in C^2_c(\R^N)$. Test \eqref{eqws} with function $\vp$   we have
\begin{subequations}
\begin{align}
& \displaystyle   \intl_{\R^N} (\nabla U_n\nabla \vp+\lambda U_n\vp)dx=\intl_{\R^N}\intl_{\R^N} \frac{\vp(x)}{V_n(x)^q}  J(x-y) \widetilde U_n^p(y) dy dx +\intl_{\R^N}\rho \vp dx, \label{eqi3}\\[0.2in]
& \displaystyle   \int_{\R^N} (\nabla V_n\nabla \vp+\mu V_n\vp)dx=\intl_{\R^N}\intl_{\R^N} \frac{\vp(x)}{V_n(x)^s} J(x-y) \widetilde U_n^m(y) dy dx.\label{eqi4}
\end{align}
\end{subequations}
Using the convergence $(U_n, V_n)\to (U,V)$ in $C^{1,\gamma}_{loc}(\R^N)\times C^{1,\gamma}_{loc}(\R^N)$ we may pass to the limit with $n\to \infty$ in the left hand-side of \eqref{eqi3}-\eqref{eqi4}. 
Using the fact that $(\widetilde U_n, \widetilde V_n)\in \mathscr{X}$, $(\widetilde U_n, \widetilde V_n)\to (U, V)$ pointwise in $\R^N$ and the Lebesgue dominated theorem we may also pass to the limit in right hand-side of \eqref{eqi3}-\eqref{eqi4}. We thus obtain
$$
\begin{aligned}
& \displaystyle   \intl_{\R^N} (\nabla U\nabla \vp+\lambda U\vp)dx=\intl_{\R^N}\intl_{\R^N} \frac{\vp(x)}{V(x)^q}  J(x-y) U ^p(y) dy dx +\intl_{\R^N}\rho \vp dx, \\[0.2in]
& \displaystyle   \int_{\R^N} (\nabla V\nabla \vp+\mu V\vp)dx=\intl_{\R^N}\intl_{\R^N} \frac{\vp(x)}{V(x)^s} J(x-y) U^m(y) dy dx.
\end{aligned}
$$
Hence, $(U, V)\in C^{1,\gamma}_{loc}(\R^N)\times C^{1,\gamma}_{loc}(\R^N)$ is a positive weak solution of \eqref{GMs}. Now, by the standard regularity theory (see, for instance, \cite[Theorem 9.19]{GT2001}) it follows that $(U,V)\in C^{2, \gamma}_{loc}(\R^N)\times C^{2, \gamma}_{loc}(\R^N)$.
\qed

\section{Proof of Theorem \ref{thp}}

The proof of Theorem \ref{thp} is similar to that of Theorem \ref{ths} in which we take $b=0$. One main difference is that due to the power behavior of $\rho(x)$  in \eqref{ro2} we need to adjust the order of regularity in the relevant H\"older spaces.
Hereafter, we let 
\begin{equation}\label{fi}
\Phi_a(x):=\Psi_{a,0}(x)=(1+|x|^2)^{-a/2} \quad\mbox{ for all }x\in \R^N.
\end{equation} 
From \eqref{ro2} there exist  $\beta>\alpha>0$ such that 
\begin{equation}\label{ro12}
\alpha \Phi_{a}(x)\leq \rho(x)\leq \beta \Phi_{a}(x)\quad\mbox{ for all }x\in \R^N,
\end{equation}
where $\Phi_{a}(x)$ is defined in \eqref{fab}.
By \eqref{ro2} and \eqref{kernimp2} there exist constants $C_1>c_1>0$ and $C_2>c_2>0$ such that
\begin{equation}\label{mj12}
c_1\Phi_{ap}(x) \leq J*\Phi_{ap}(x)\leq C_1 \Phi_{ap}(x) \quad\mbox{ for all }x\in \R^N,
\end{equation}
and
\begin{equation}\label{mj22}
c_2\Phi_{am}(x) \leq J*\Phi_{am}(x)\leq C_2 \Phi_{am}(x) \quad\mbox{ for all }x\in \R^N.
\end{equation}
For $a> 0$ given by \eqref{ro2}, we define
$$
A=\frac{am}{1+s}.
$$
Let us rewrite the condition \eqref{ro2} as 
$$
\min\{am, ap\}>\max\{\theta-1, 0\}.
$$
Note also that 
$$N>\theta>\max\{\theta-1, 0\}.
$$
Thus, we can choose $r>1$ such that 
\begin{equation}\label{t1}
\min\Big\{ \theta, am, ap\Big\}>\frac{N}{r}>\max\{\theta-1, 0\}.  
\end{equation}
In particular, this yields
\begin{equation}\label{t2}
\theta-1<\frac{N}{r}<\theta \quad \text{ and }\quad \frac{N}{a\min\{m,p\}}<r.
\end{equation}
Let $\nu_0:=\theta-N/r$, so that $0<\nu_0<1$.
We define
\begin{equation}\label{nana}
\nu:=\min\{\nu_0,\gamma\}\in (0,1).
\end{equation}
Recall that $\rho\in C^{0,\gamma}(\R^N)$.

For $D_1>d_1>0$ and $D_2>d_2>0$ which will be precised later we introduce
\[
\mathscr{X}=\left\{(u,v)\in C^{0,\nu}_{loc}(\R^N)\times C^{0,\nu}_{loc}(\R^N): 
\begin{aligned}
d_1 \Phi_{a} & \leq u\leq  D_1 \Phi_{a}\\
d_2 \Phi_{A} & \leq v\leq  D_2 \Phi_{A}
\end{aligned} \quad\mbox{ in }\R^N \right\}.
\]
Because of \eqref{lm}, we can assume that $\lambda>0$ and $\mu>0$ satisfy \eqref{lam1} with $b=0$ and \eqref{mmu} with $b=0$, respectively. 
For $(u,v)\in \mathscr{X}$, let $Tu, Tv\in C^{2, \nu}_{loc}(\R^N)$ be the unique solutions of the de-coupled system
\[
\begin{cases}
\displaystyle   -\Delta Tu+\lambda Tu=\frac{J * u^p}{v^q}+\rho(x) &\mbox{ in }\R^N\, ,\\[0.1in]
\displaystyle   -\Delta Tv+\mu Tv=\frac{J * u^m}{(Tv)^s} &\mbox{ in }\R^N,\\[0.1in]
Tu, Tv\to 0\quad \mbox{ as } |x|\to \infty.&
\end{cases}
\]
We now set
\begin{equation}\label{fx2}
\mathscr{F}:\mathscr{X}\to C^{2,\nu}_{loc}(\R^N)\times C^{2,\nu}_{loc}(\R^N),\quad \mathscr{F}(u,v)=(Tu, Tv).
\end{equation}
We claim first that $Tu$ and $Tv$ are well defined. Let us note that thanks to 
\eqref{ro2} and \eqref{mj12} we have
\begin{equation}\label{wa0s2}
\frac{J * u^p}{v^q}+\rho(x)\simeq \frac{\Phi_{ap}}{\Phi_{Aq}}+\Phi_{a}\simeq \Phi_{ap-Aq}+\Phi_{a}.
\end{equation}
Next we notice that since $0<\sigma\leq 1< p$ we have 
\begin{equation}\label{wa1s2}
ap-Aq=ap-a\frac{mq}{s+1}=ap-a(p-1)\sigma\geq ap-a(p-1)=a.
\end{equation}
Thus, by \eqref{wa0s2} and \eqref{wa1s2} we find
$$
\frac{J * u^p}{v^q}+\rho(x)\simeq \Phi_{a}.
$$
The existence and uniqueness of $Tu\in C^{2}(\R^N)$ follows from 
Lemma \ref{exi} with 
$$
\mu=\lambda, \quad K(x)=\frac{J * u^p}{v^q}+\rho\simeq \Phi_{a},\quad\mbox{  and }\quad s=0.
$$ 
Note that from the definition of $(u, v)\in \mathscr{X}$
we have $u^p\in L^r(\R^N)$, since by \eqref{t1} we have $r>N/ap$.
By Lemma~\ref{l2} and \eqref{t2} one has $J * u^p\in C^{0,\nu}(\R^N)$.
Since $J*u^p/v^q\in C^{0,\nu}_{loc}(\R^N)$ and $\rho\in C^{0,\gamma}(\R^N)$, one has $K\in C^{0,\nu}_{loc}(\R^N)$.
We point out that the assumptions of Lemma~\ref{exi} are satisfied and the above argument works.
By the standard Schauder estimates, one derives $Tu\in C^{2, \nu}_{loc}(\R^N)$. 

The existence and uniqueness of $Tv\in C^{2}(\R^N)$ follow by
Lemma \ref{exi} and \eqref{mj22} with 
$$
K=J * u^m\simeq \Phi_{am}.
$$ 
We have $u^m\in L^r(\R^N)$, since by \eqref{t2} one has $r>N/am$.
By Lemma~\ref{l2} and \eqref{t2} it follows that $K\in C^{0,\nu}_{loc}(\R^N)$.
Thus, the assumptions of Lemma~\ref{exi} are satisfied.
Since $K(x)/(Tv)^s\in C^{0,\nu}_{loc}(\R^N)$, by the local Schauder estimates one derives $Tv\in C^{2,\nu}_{loc}(\R^N)$.

The proofs of Lemmas~\ref{lconst1} and \ref{lconst2} are also valid for $b=0$ and thus, we may follow the lines of the proof of Theorem~\ref{thp} (in which we take $b=0$). 
An important point to note is that the sequences ${f_n}$ and ${g_n}$, as defined in Section~4, are bounded in $L^r(\mathbb{R}^N)$ only for sufficiently large values of $r > 1$.
However, this difference does not affect the subsequent argument; we omit the details here.
This concludes the proof of Theorem~\ref{thp}.
\qed

\section{Proof of Theorem \ref{thl}}

(i) Suppose by contradiction that $\min\{ m, p\}\leq\frac{\theta}{a}$ and there exists a positive solution $(u, v)$ to \eqref{GMs}. Without losing the generality, we may assume $m\leq \frac{\theta}{a}$. 

Let $w=\Phi_a$, where $\Phi_a$ is defined in \eqref{fi}. Then, by \eqref{ro3} and the calculations \eqref{fabb} in Lemma \ref{cf} we have
$$
-\Delta w+\lambda w\leq C \rho(x)\quad\mbox{ in }\R^N,
$$
for some $C>0$. Then, 
$$
-\Delta u+\lambda u\geq \rho(x)\geq -\Delta(\frac{w}{C})+\lambda \frac{w}{C} \quad\mbox{ in }\R^N.
$$
Using Lemma \ref{compp} with $\mu=\lambda$, $s=0$ and $K=\rho(x)$ we deduce 
$$
u(x)\geq \frac{w(x)}{C}=\frac{1}{C}\Phi_a(x)\geq c|x|^{-a}\quad\mbox{in } \{|x|>1\}, \mbox{ for some }c>0.
$$
Now, for $|x|>1$ we find

$$
\begin{aligned}
(J*u^m)(x)& =\intl_{\R^N} |x-y|^{\theta-N}u(y)^m dy\\
&\geq C \intl_{|y|>2|x|} |x-y|^{\theta-N}|y|^{am} dy\\
&\geq C\intl_{|y|>2|x|} |y|^{\theta-N+am} dy=\infty,
\end{aligned}
$$
and this contradicts the condition \eqref{Jpm}.

(ii) Assume that $0<\sigma<1$ and that \eqref{na1}-\eqref{na2} hold. The existence of a positive solution to \eqref{GMs} follows the lines of the proof in Theorem \ref{thp}. Note first that \eqref{ro12} still holds. In light of Lemma \ref{lgl}, the estimates \eqref{mj12}-\eqref{mj22} are now replaced with 
\begin{equation}\label{mja}
c_1\Phi_{ap-\theta}(x) \leq J*\Phi_{ap}(x)\leq C_1 \Phi_{ap-\theta}(x) \quad\mbox{ for all }x\in \R^N,
\end{equation}
and
\begin{equation}\label{mjb}
c_2\Phi_{am-\theta}(x) \leq J*\Phi_{am}(x)\leq C_2 \Phi_{am-\theta}(x) \quad\mbox{ for all }x\in \R^N,
\end{equation}
where $C_1>c_1>0$ and $C_2>c_2>0$ are constants.
Define
$$
A=\frac{am-\theta}{1+s}.
$$
Fix now $r>1$ such that $\theta-1<\frac{N}{r}<\theta$. Note that by \eqref{na1} we have $a\min\{p,m\}>\theta$. Thus, \eqref{t1}-\eqref{t2} hold. We define $\nu$ as in \eqref{nana} and work now with the same space $\mathscr{X}$ and the same mapping $\mathcal{F}$ as in the proof of Theorem \ref{thp}. 
To prove that $\mathcal{F}$ is well defined, we note that 
$$
\frac{J * u^p}{v^q}+\rho(x)\simeq \frac{\Phi_{ap-\theta}}{\Phi_{Aq}}+\Phi_{a}\simeq \Phi_{ap-Aq-\theta}+\Phi_{a}.
$$
Next by \eqref{na2} we have 
$$
\begin{aligned}
ap-Aq-\theta & =ap-\frac{am-\theta}{s+1}q-\theta\\
&=ap-\frac{amq}{s+1}-\theta\Big(1-\frac{q}{s+1}\Big)\\
&=ap-a(p-1)\sigma -\theta\Big(1-\frac{q}{s+1}\Big)\\
&=a+a(p-1)(1-\sigma)-\theta\Big(1-\frac{q}{s+1}\Big)\\
&\geq a.
\end{aligned}
$$
Thus, by the above estimates, we deduce
$$
\frac{J * u^p}{v^q}+\rho(x)\simeq \Phi_{a}.
$$
Also, $J*u^m\simeq \Phi_{am-\theta}$. From now on one follows line by line the proof of Theorem~\ref{thp}. \qed

\section{Discussion}

In this paper we studied the positive steady-state solutions to a reaction-difusion system that originates from the model of Gierer and Meinhardt \cite{GM72} in 1972. Our goal was to investigate the influence of the non-local terms $J*u^p$ and $J*u^m$ in this model, which are included to better illustrate the biological phenomena.  

In the frame of the anti-Turing condition $0<\sigma\leq 1$ we provided a general setting that contains a broad classes of kernels $J\in C^1(\R^N\setminus\{0\})$ such as  
 $J(y)=(1+|y|)^{-\alpha}$, $J(y)=(1+|y|)^{-\alpha}e^{-\beta|y|}$ and $J(y)=|y|^{-\alpha}$. Our findings reveal that the various integrability conditions imposed on $J$ impact both the range of exponents $p,q,m, s$ as well as the death rates $\lambda$ and $\mu$ for which a solution exists. This is more visible if we assume a low order of integrability of the kernel $J$ as it was noted in Theorem \ref{thl} where $J\not\in L^r(\R^N)$ for all $r\geq 1$. In all situations discussed in the present article, the range of death rates $\lambda$ and $\mu$ is given by the curve \eqref{lm}; this  was also observed in the local case discussed in \cite{G23}. We pointed out that each situation discussed in Theorems \ref{ths}, \ref{thp} and \ref{thl} yields different values of the constants $C, c>0$ in \eqref{lm}. Moreover, these variations can be compared with the findings in \cite{G23}, which provide an alternative perspective on how the non-local terms influence the existence of solutions of the non-local problem \eqref{GMs}.

\section*{Appendix: Proof of Lemma \ref{l2}}

In this section, we prove Lemma \ref{l2} by providing all details of the estimate \eqref{hol}.

Let $x, y\in \R^N$, $x\neq y$. For $\zeta\in \R^N$ we decompose
$$
\mathcal{T}f(\zeta)=\intl_{\R^N}J(z-\zeta)f(z) dz :=\mathcal{T}_1(\zeta)+\mathcal{T}_2(\zeta),
$$
where
$$
\begin{cases}
\ds\mathcal{T}_1(\zeta)=\intl_{|z-x|<2|x-y|}J(z-\zeta)f(z) dz\\[0.4in]
\ds \mathcal{T}_2(\zeta)=\intl_{|z-x|\geq 2|x-y|} J(z-\zeta)f(z) dz
\end{cases}
\mbox{ for all }\zeta\in \R^N.
$$
Using \eqref{J} and H\"older's inequality we estimate
$$
\begin{aligned}
\mathcal{T}_1(x) &=\intl_{|z-x|<2|x-y|}J(z-x)f(z) dz\\
&\leq C \intl_{|z-x|<2|x-y|} |z-x|^{\theta-N} f(z) dz\\
&\leq C \biggl(\intl_{|z-x|<2|x-y|}\big|z-x\big|^{(\theta-N)\frac{r}{r-1}} dz\biggl)^{\frac{r-1}{r}}  \biggl(\intl_{|z-x|<2|x-y|}|f(y)|^r dy\biggl)^{\frac{1}{r}} \\[0.1cm]
&= C\biggl(\intl_{0}^{2|x-y|}t^{(\theta-N) \frac{r}{r-1}}  t^{N-1} dt\biggl)^{\frac{r-1}{r}}  \|f\|_r.\\
\end{aligned}
$$
Since $(\theta-N)\frac{r}{r-1}+N >0$ we deduce
\begin{equation}\label{e1}
\mathcal{T}_1(x)\leq C(N, \theta, r) \big|x-y\big|^{\theta-\frac{N}{r}}\|f\|_r.
\end{equation}
We proceed similarly to deduce
\begin{equation}\label{e2}
\mathcal{T}_1(y)\leq C\big|x-y\big|^{\theta-\frac{N}{r}}\|f\|_r.
\end{equation}
Next, we observe that
\begin{equation}\label{e3}
|\mathcal{T}_2(x)-\mathcal{T}_2(y)|\leq \intl_{|z-x|\geq 2|x-y|} \big|J(z-x)-J(z-y)\big|f(z) dz.
\end{equation}
We apply the Mean Value Theorem to $g(\zeta)=J(z-\zeta)$ over the segment $[x,y]\subset \R^N$. Combining this fact with \eqref{J} we find 
\begin{equation}\label{e4}
\begin{aligned}
\big|J(z-x)-J(z-y)\big|&=|g(x)-g(y)|\\
&=|\nabla g(\xi)||x-y|\quad\mbox{ for some }\xi \in [x,y]\\
&=|\nabla J(z-\xi)||x-y|\\
&\leq C|z-\xi|^{\theta-N-1}|x-y|.
\end{aligned}
\end{equation}
From $|z-x|\geq 2|x-y|$ and $\xi \in [x,y]$ we find
$$
|z-\xi|\geq |z-x|-|x-\xi|\geq |z-x|-|x-y| \geq \frac{|z-x|}{2}.
$$
Using this last estimate in \eqref{e4} and then in \eqref{e3} we deduce
$$
|\mathcal{T}_2(x)-\mathcal{T}_2(y)|\leq C |x-y| \intl_{|z-x|\geq 2|x-y|} \big|z-x\big|^{\theta-N-1}f(z) dz.
$$
Next, by H\"older inequality we further estimate
$$
\begin{aligned}
|\mathcal{T}_2(x)-\mathcal{T}_2(y)|& \leq C |x-y| \intl_{|z-x|\geq 2|x-y|} \big|z-x\big|^{\theta-N-1}f(z) dz \\[0.1cm]
&\leq C|x-y|\biggl(\intl_{|z-x|\geq 2|x-y|}\big|x-z\big|^{(\theta-N-1)\frac{r}{r-1}} dz\biggl)^{\frac{r-1}{r}}  \|f\|_r\\
&= C|x-y|\biggl(\intl_{2|x-y|}^\infty t^{(\theta-N-1)\frac{r}{r-1}} t^{N-1} dt\biggl)^{\frac{r-1}{r}}  \|f\|_r.\\
\end{aligned}
$$
Since $(\theta-N-1)\frac{r}{r-1}+N <0$ we deduce
\begin{equation}\label{e5}
|\mathcal{T}_2(x)-\mathcal{T}_2(y)| \leq C(N,\theta, q) \big|x-y\big|^{\theta-\frac{N}{r}}\|f\|_r.
\end{equation}
Finally, from \eqref{e1}, \eqref{e2} and \eqref{e5} we have
$$
\begin{aligned}
|\mathcal{T}f(x)-\mathcal{T}f(y)|& \leq \mathcal{T}_1(x)+\mathcal{T}_1(y)+|\mathcal{T}_2(x)-\mathcal{T}_2(y)|\\
&\leq  C(N,\theta, r) \big|x-y\big|^{\theta-\frac{N}{r}}\|f\|_r,
\end{aligned}
$$
which concludes the proof of Lemma \ref{l2}.

\end{document}